\newtheorem{theorem}{Theorem}
\newtheorem{proposition}[theorem]{Proposition}
\newtheorem{remark}[theorem]{Remark}
\begin{document}

\title{Asymptotic expansion for the Hartman-Watson distribution}
\author{Dan Pirjol}

\address
{School of Business\newline
\indent Stevens Institute of Technology\newline 
\indent Hoboken, NJ 07030}

\email{
dpirjol@gmail.com}
\date{20 February 2021}

\keywords{Asymptotic expansions, saddle point method, linear functional of the geometric Brownian motion}

\begin{abstract}
The Hartman-Watson distribution with density $f_r(t)=\frac{1}{I_0(r)}
\theta(r,t)$ with $r>0$ is a probability distribution defined on 
$t \geq 0$ which appears in several problems of applied probability. 
The density of this distribution is expressed in terms of an integral 
$\theta(r,t)$ which is difficult to evaluate numerically for small $t\to 0$.
Using saddle point methods, we obtain the first two terms 
of the $t\to 0$ expansion of $\theta(\rho/t,t)$ at fixed $\rho > 0$. 
An error bound is obtained by numerical estimates of the integrand, 
which is furthermore uniform in $\rho$.
As an application we obtain the leading asymptotics of the density of the
time average of the geometric Brownian motion as $t\to 0$.
This has the form $\mathbb{P}(\frac{1}{t}
\int_0^t e^{2(B_s+\mu s)} ds \in da ) \sim (2\pi t)^{-\frac12} g(a,\mu)
e^{-\frac{1}{t} J(a)}\frac{da}{a}$, with an exponent $J(a)$ which reproduces 
the known result obtained previously using Large Deviations theory.
\end{abstract}

\maketitle

\section{Introduction}
\label{sec:1}

The Hartman-Watson distribution was introduced in the context of
directional statistics \cite{Hartman1974} and was studied further
in relation to the first hitting time of certain diffusion processes
\cite{Kent1982}. 
This distribution has received considerable
attention due to its relation to the law of the time integral of the
geometric Brownian motion, see Eq.~(\ref{AB}) below \cite{Yor1992}. 

The Hartman-Watson distribution is given in terms of the function 
$\theta(r,t)$ defined as
\begin{equation}\label{thetadef}
\theta(r,t) = \frac{r}{\sqrt{2\pi^3 t}} e^{\frac{\pi^2}{2t}}
\int_0^\infty e^{-\frac{\xi^2}{2t}} e^{-r \cosh\xi} \sinh\xi \sin \frac{\pi\xi}{t}
d\xi\,.
\end{equation}
The normalized function $f_r(t) = \frac{\theta(r,t)}{I_0(r)}$ defines the
density of a random variable taking values on the positive real axis
$t \geq 0$, called the Hartman-Watson law \cite{Hartman1974}.

The function $\theta(r,t)$ is related to the distribution of the time-integral of a geometric Brownian motion.
Define
\begin{equation}\label{Adef}
A_t^{(\mu)} = \int_0^t e^{2(B_s +\mu s)} ds 
\end{equation}
with $B_t$ a standard Brownian motion and $\mu\in \mathbb{R}$.
The time-integral of the geometric Brownian motion is relevant for the pricing of Asian options in the 
Black-Scholes model \cite{DufresneReview}, and appears also in actuarial science \cite{Boyle2008}
and in the study of diffusion processes 
in random media \cite{Comtet1998}.
Other applications to mathematical finance are related to the distributional properties of stochastic
volatility models with log-normally distributed volatility, such as the
$\beta=1$ log-normal SABR model \cite{SABRbook,Cai2017} and the Hull-White model 
\cite{Gul2006,Gul2010}. 

An explicit result for the joint distribution of $(A_t^{(\mu)}, B_t)$
was given by Yor \cite{Yor1992}
\begin{eqnarray}\label{AB}
\mathbb{P}(A_t^{(\mu)} \in du, B_t +\mu t \in dx) = e^{\mu x - \frac12 \mu^2 t}
\exp\left(  - \frac{1+e^{2x}}{2u} \right) \theta(e^x/u,t) 
\frac{du dx}{u}\,, 
\end{eqnarray}
where the function $\theta(r,t)$ is given by (\ref{thetadef}).

The Yor formula (\ref{AB}) yields also the density of $A_t^{(\mu)}$ by integration over 
$B_t$. The usefulness of this approach is limited by 
issues with the numerical evaluation of the integral in (\ref{thetadef}) for
small $t$, due to the rapidly oscillating factor $\sin(\pi\xi/t)$ 
\cite{Barrieu2004,Boyle2008}. Alternative numerical approaches which
avoid this issue were presented in \cite{Bernhart2015,Ishyiama2005}.

For this reason considerable effort has been 
devoted to applying analytical methods to simplify Yor's formula. 
For particular cases $\mu=0,1$ simpler expressions as single integrals
have been obtained for the density of $A_t^{(\mu)}$ by Comtet et al 
\cite{Comtet1998} and Dufresne \cite{Dufresne2001}. 
See the review article by Matsumoto and Yor \cite{Matsumoto2005}
for an overview of the literature. 

In the absence of simple exact analytical results, it is important to have
analytical expansions of $\theta(r,t)$ in the small-$t$ region. 
Such an expansion has been derived by Gerhold in \cite{Gerhold2011},
by a saddle point analysis of the Laplace inversion integral of the
density $f_r(t)$.

In this paper we derive the $t\to 0$ asymptotics of the function
$\theta(r,t)$ at fixed $rt =\rho$ using the saddle point method. 
This regime is important for the study of the small-$t$ asymptotics of
the density of $\frac{1}{t} A_t^{(\mu)}$ following from the Yor formula
(\ref{AB}). The resulting expansion turns out to give also a good
numerical approximation of $\theta(r,t)$ for all $t\geq 0$.
The expansion has the form
\begin{equation}\label{thetaexp}
\theta(\rho/t,t) = \frac{1}{2\pi t} e^{-\frac{1}{t}
\left( F(\rho) - \frac{\pi^2}{2}\right) } 
\left( G(\rho)  + G_1(\rho) t + O(t^2) \right)\,.
\end{equation}
The leading order term proportional to $G(\rho)$ is given in 
Proposition \ref{prop:1}, and the subleading correction 
$G_1(\rho)$ is given in the Appendix.

The paper is structured as follows. In Section~\ref{sec:2} we present the
leading order asymptotic expansion of the integral $\theta(r,t)$ at fixed 
$\rho = rt$. The main result is Proposition \ref{prop:1}.
An explicit error bound for the leading term in the asymptotic expansion
of $\theta(\rho/t,t)$ is given, guided by numerical tests, which is furthermore uniformly bounded in $\rho$.
In Section~\ref{sec:3} we compare this asymptotic result with existing
results in the literature on the small $t$ expansion of the Hartman-Watson 
distribution \cite{Barrieu2004,Gerhold2011}.
In Section~\ref{sec:4} we apply the expansion to obtain the leading $t\to 0$ asymptotics of the density of the time averaged geometric Brownian motion 
$\mathbb{P}(\frac{1}{t} \int_0^t e^{2(B_s+\mu s)} ds \in da )=f(a,t) \frac{da}{a}$. 
The leading asymptotics of this density has the form 
$f(a,t) \sim (2\pi t)^{-\frac12} g(a,\mu) e^{-\frac{1}{t} J(a)}$. 
The exponential factor $J(a)$ reproduces a known
result obtained previously using Large Deviations theory \cite{MLP,SIFIN}.
An Appendix gives an explicit result for the subleading correction.

\section{Asymptotic expansion of $\theta(\rho/t, t)$ as $t\to 0$}
\label{sec:2}

We study here the asymptotics of $\theta(r,t)$ as $t\to 0$ at fixed 
$rt = \rho$. This regime is different from that considered in 
\cite{Gerhold2011}, which studied the asymptotics of $\theta(r,t)$ as $t\to 0$ 
at fixed $r$.  

\begin{proposition}\label{prop:1}
The $t\to 0$asymptotics of the Hartman-Watson integral $\theta(\rho/t,t)$ is
\begin{equation}
\theta(\rho/t,t) = \frac{1}{2\pi t}  e^{-\frac{1}{t}
\left( F(\rho) - \frac{\pi^2}{2}\right) } 
\left(G(\rho)  + G_1(\rho) t+ O(t^2) \right)\,,\quad t\to 0\,.
\end{equation}

The function $F(\rho)$ is given by 
\begin{eqnarray}\label{Fsol}
F(\rho) = \left\{
\begin{array}{cc}
 \frac12 x_1^2- \rho \cosh x_1 + \frac{\pi^2}{2} & \,, 0 < \rho < 1 \\
-\frac12 y_1^2+ \rho \cos y_1 + \pi y_1 & \,, \rho > 1 \\
\frac{\pi^2}{2} - 1 & \,, \rho = 1  \\
\end{array}
\right.
\end{eqnarray}
and the function $G(\rho)$ is given by
\begin{eqnarray}\label{Gsol}
G(\rho) = \left\{
\begin{array}{cc}
\frac{\rho \sinh x_1}{\sqrt{\rho \cosh x_1 - 1}} & \,,
   0 <\rho<1\\
\frac{\rho sin y_1}{\sqrt{1+\rho \cos y_1 }} & \,, \rho>1\\
 \sqrt3  & \,, \rho = 1  \\
\end{array}
\right.
\end{eqnarray}

Here $x_1$ is the solution of the equation
\begin{equation}\label{eqx1}
\rho \frac{\sinh x_1}{x_1} = 1
\end{equation}
and $y_1$ is the solution of the equation 
\begin{equation}\label{eqy1}
y_1 + \rho\sin y_1 = \pi\,.
\end{equation}

The subleading correction is $G_1(\rho)= \frac{1}{2} G(\rho) \tilde g_2(\rho)$ 
where $\tilde g_2(\rho)$ is given in explicit form in the Appendix.

\end{proposition}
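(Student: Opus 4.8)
The plan is to apply the saddle point method to the integral representation (\ref{thetadef}). First I would write $\sin\frac{\pi\xi}{t} = \frac{1}{2i}(e^{i\pi\xi/t} - e^{-i\pi\xi/t})$ and combine the exponential factors, so that after setting $r = \rho/t$ the integrand takes the form $e^{-\frac{1}{t}\phi_\pm(\xi)}$ up to algebraic prefactors, with phase functions $\phi_\pm(\xi) = \frac12\xi^2 \mp i\pi\xi + \rho\cosh\xi - \frac{\pi^2}{2}$ (the $\cosh\xi$ term coming from $r\cosh\xi = \frac{\rho}{t}\cosh\xi$, the $\sinh\xi$ staying as a prefactor). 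The saddle point equation is $\phi_\pm'(\xi) = \xi \mp i\pi + \rho\sinh\xi = 0$. I would then show that the relevant saddle depends on whether $\rho<1$ or $\rho>1$: for $\rho<1$ one looks for a real saddle $\xi = x_1$ after the substitution absorbing the $\mp i\pi$ by shifting the contour through $\xi = i\pi + (\text{real})$, giving the equation $\rho\sinh x_1 = x_1$, i.e. (\ref{eqx1}); for $\rho>1$ one finds instead a purely imaginary saddle $\xi = i y_1$ with $y_1 + \rho\sin y_1 = \pi$, i.e. (\ref{eqy1}). The value $F(\rho)$ in (\ref{Fsol}) is then just $\mathrm{Re}\,\phi$ evaluated at the saddle (after the shift), and the case $\rho=1$ is the coalescence of the two saddles, which must be treated separately.

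Next I would justify the contour deformation: the original contour is $[0,\infty)$, and I would push it to pass through the dominant saddle, checking that the arcs at infinity contribute nothing (the $\frac12\xi^2$ term gives Gaussian decay in the real direction, while $\rho\cosh\xi$ controls the behavior, so one must be careful about the direction of steepest descent near the saddle). For $\rho<1$ the saddle is a genuine interior saddle and the standard Laplace expansion gives $\theta(\rho/t,t) \sim \frac{\text{prefactor}(x_1)}{\sqrt{2\pi t \,\phi''(x_1)}} e^{-\phi(x_1)/t}$, where $\phi''(x_1) = 1 + \rho\cosh x_1 = \rho\cosh x_1 - 1 + 2$... — here I must track the combinatorial factors from $\sin$, the $\frac{r}{\sqrt{2\pi^3 t}}e^{\pi^2/(2t)}$ prefactor, and the $\sinh\xi$ in the integrand carefully, so that the $\frac{1}{2\pi t}$ and the ratio $\frac{\rho\sinh x_1}{\sqrt{\rho\cosh x_1-1}}$ in (\ref{Gsol}) come out. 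For $\rho>1$ the analogous computation with the imaginary saddle $iy_1$ produces $\sqrt{1+\rho\cos y_1}$ in the denominator (the second derivative changes sign), and one must verify the contour genuinely passes through this saddle in the steepest-descent direction. The case $\rho=1$ has $x_1 = y_1 = 0$, $\phi''(0) = 0$, and $\phi'''(0)\ne 0$; the leading term is then governed by an Airy-type (cube-root) scaling, but the claim is only that the $\frac{1}{2\pi t}$ coefficient degenerates to a milder power — actually since $F(1) = \frac{\pi^2}{2}-1$ and $G(1) = \sqrt 3$ is finite, I suspect the resolution is that $\rho=1$ is handled as a limit $\rho\to 1$ of either branch, with $\sqrt{\rho\cosh x_1 - 1} \to \sqrt 3 \cdot (\text{something})$ matching; I would verify $\lim_{\rho\to1^-}\frac{\rho\sinh x_1}{\sqrt{\rho\cosh x_1-1}} = \sqrt3$ by expanding $x_1$ in powers of $(1-\rho)$ using (\ref{eqx1}).

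Finally, the subleading term $G_1(\rho) = \frac{1}{4\pi}G(\rho)\tilde g_2(\rho)$ comes from carrying the Laplace/saddle point expansion to the next order: expanding the prefactor and the phase to higher order around the saddle and collecting the $O(t)$ correction relative to the leading term (the usual formula involving $\phi^{(3)}, \phi^{(4)}$ and the first two derivatives of the amplitude at the saddle), which I would relegate to the Appendix. The main obstacle I anticipate is the contour analysis: ensuring that the deformed contour for each regime genuinely picks up the claimed saddle as the dominant contribution, that no other saddles (there are infinitely many solutions of $\xi + \rho\sinh\xi = i\pi$ in the complex plane, coming from the periodicity of $\sin$ in the $\rho>1$ imaginary-axis picture) contribute at the same exponential order, and that the $e^{-i\pi\xi/t}$ piece is genuinely subdominant (or that the two pieces combine correctly). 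This is precisely where the uniform-in-$\rho$ error bound advertised in the abstract — obtained "by numerical estimates of the integrand" — will be needed, and I would structure the rigorous part around bounding $\theta(\rho/t,t)$ minus its leading term by an explicit $t$-dependent quantity rather than attempting a fully classical steepest-descent justification.
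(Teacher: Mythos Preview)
Your overall strategy matches the paper's: split $\sin(\pi\xi/t)$ into exponentials, identify the phase $h_\pm(\xi)=\frac12\xi^2+\rho\cosh\xi\mp i\pi\xi$, locate the saddles of $h_+$, deform onto steepest descent curves, and apply Watson's lemma. The saddle identifications for $\rho<1$ (shift to $\xi=i\pi+x$, critical points at $x=\pm x_1$) and $\rho>1$ (purely imaginary saddle $\xi=iy_1$) are correct, and your anticipation that the two pieces $I_\pm$ combine by symmetry rather than one being subdominant is exactly what the paper does.

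There is one genuine error. At $\rho=1$ you write $y_1=0$, $\phi''(0)=0$, $\phi'''(0)\neq0$, and expect an Airy-type analysis. In fact $y_1=\pi$ at $\rho=1$, the saddle sits at $\xi=i\pi$, and one has $h''(i\pi)=1-\rho=0$ \emph{and} $h'''(i\pi)=\rho\sinh(i\pi)=0$; the first nonvanishing derivative is $h''''(i\pi)=-\rho=-1$. So the saddle is fourth order, not third. This matters because the amplitude $\sinh\xi$ also vanishes at $\xi=i\pi$: writing $z=\xi-i\pi$, one gets $\sinh\xi/h'(\xi)\sim 6/z^2\sim\sqrt{3/2}\,/\sqrt{-\tau}$, so after Watson's lemma the leading term is again $\propto\sqrt{\pi t}$ with the \emph{same} $t$-power as in the generic cases---there is no Airy layer and no cube-root scaling. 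Your fallback of treating $\rho=1$ as the limit $\rho\to1^\pm$ does give the right answer ($G(1)=\sqrt3$), and the paper confirms that the direct fourth-order computation agrees with this limit; but the mechanism you describe for the degenerate case is incorrect.

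A smaller slip: for $\rho<1$ you write $\phi''(x_1)=1+\rho\cosh x_1$. After the shift $\xi=i\pi+x$ the phase becomes $\frac12x^2-\rho\cosh x+\frac{\pi^2}{2}$, so the second derivative at $x_1$ is $1-\rho\cosh x_1<0$; the steepest-descent direction at this saddle is therefore vertical (imaginary), which is why the paper's contour leaves the horizontal segment $\{y=\pi,\,|x|\le x_1\}$ at the saddles $i\pi\pm x_1$ and descends along the curve $\rho\sinh x\sin y=(\pi-y)x$. Getting this sign right is what produces $\sqrt{\rho\cosh x_1-1}$ rather than $\sqrt{1+\rho\cosh x_1}$ in $G(\rho)$.
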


Plots of the functions $F(\rho)$ and $G(\rho)$ are shown in 
Figure~\ref{Fig:FG}.
The properties of these functions are studied in more detail 
in Section \ref{sec:FG}.

\begin{figure}
    \centering
   \includegraphics[width=2.4in]{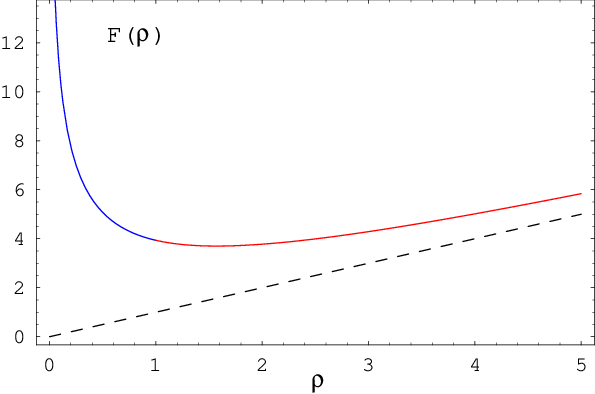}
   \includegraphics[width=2.4in]{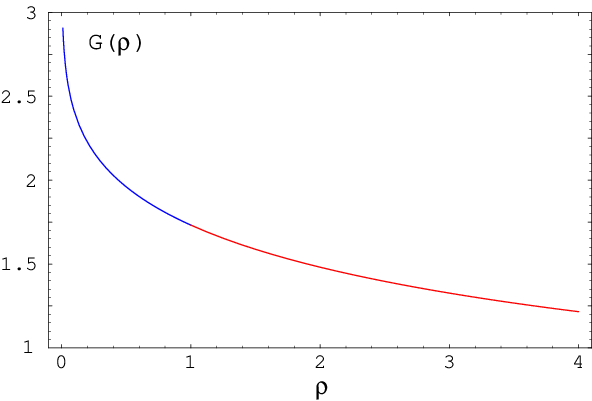}
    \caption{Left: Plot of $F(\rho)$ given in Eq.~(\ref{Fsol}). The two branches
in Eq.~(\ref{Fsol}) are shown as the blue and red curves, respectively. 
The function $F(\rho)$ has a minimum at $\rho=\frac{\pi}{2}$, with
$F(\frac{\pi}{2})=\frac{3\pi^2}{8}$. The
dashed line shows the asymptotic line $F(\rho) \sim \rho$ for 
$\rho\to \infty$.
Right: Plot of $G(\rho)$ defined in Eq.~(\ref{Gsol}). 
}
\label{Fig:FG}
 \end{figure}

\begin{proof}[Proof of Proposition \ref{prop:1}]

The function $\theta(\rho/t,t)$ can be written as 
\begin{eqnarray}
&& \theta(\rho/t, t) = 
\frac{\rho/t}{\sqrt{2\pi^3 t}} e^{\frac{\pi^2}{2t}}
\int_0^\infty e^{-\frac{1}{t} [\frac12 \xi^2 + \rho \cosh \xi]} \sinh \xi
\sin \frac{\pi\xi}{t} d\xi \\
&& = \frac{\rho/t}{\sqrt{2\pi^3 t}} e^{\frac{\pi^2}{2t}} \frac12
\int_{-\infty}^\infty e^{-\frac{1}{t} [\frac12 \xi^2 + \rho \cosh \xi]} \sinh \xi
\sin \frac{\pi\xi}{t} d\xi \nonumber \\
&& = \frac{\rho}{\sqrt{2\pi^3 t^3}} e^{\frac{\pi^2}{2t}} \frac{1}{4i}
(I_+ (\rho,t) - I_-(\rho,t))\,, \nonumber
\end{eqnarray}
with
\begin{equation}
I_\pm(\rho,t) := \int_{-\infty}^\infty 
e^{-\frac{1}{t} [\frac12 \xi^2 + \rho \cosh \xi
\mp i\pi \xi]} \sinh \xi d\xi \,.
\end{equation}

These integrals have the form
$\int_\alpha^\beta e^{-\frac{1}{t} h(\xi)} g(\xi) d\xi$
with 
$h_\pm(\xi) = \frac12 \xi^2 + \rho \cosh \xi \mp i\pi \xi$.

The asymptotics of $I_\pm(t)$ as $t\to 0$ can be obtained using the saddle-point
method, see for example 
Sec.~4.6 in Erd\'elyi \cite{Erdelyi} and Sec.~4.7 of Olver \cite{Olver}.

We present in detail the asymptotic expansion for $t\to 0$ of the integral
\begin{equation}
I_+(\rho,t) = \int_{-\infty}^\infty e^{-\frac{1}{t} h(\xi)} \sinh \xi d\xi
\end{equation}
where we denote for simplicity $h(\xi) = h_+(\xi)=\frac12 \xi^2+\rho \cosh \xi
-i\pi \xi$.
The integral $I_-(\rho,t)$ is treated analogously.

\begin{figure}
    \centering
   \includegraphics[width=1.5in]{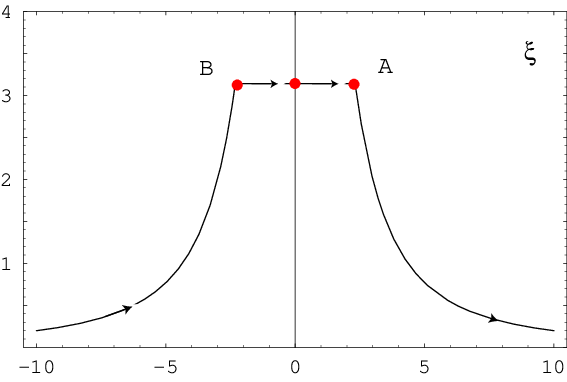}
   \includegraphics[width=1.5in]{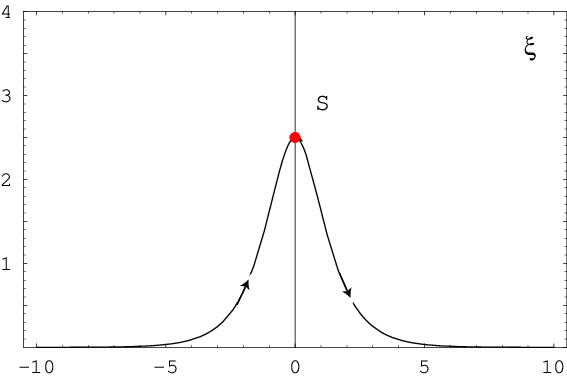}
   \includegraphics[width=1.5in]{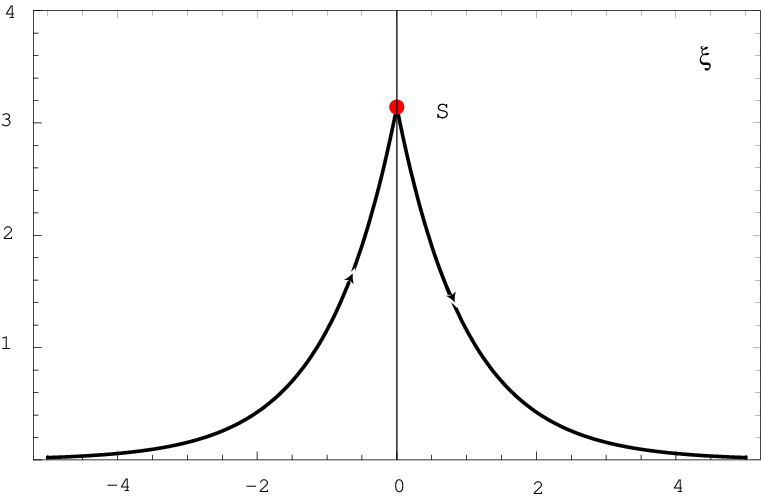}
    \caption{ Integration contours for $I_+(\rho,t)$ in the $\xi$ complex plane
for the application of the asymptotic expansion. The contours for 
$I_-(\rho,t)$ are obtained by changing the sign of $y$ (reflection in the
real axis). The red dots show the saddle points. 
Left: contour for $0<\rho<1$. The contour passes through the saddle points 
$B(\xi=-x_1+i\pi)$ and $A(\xi=x_1+i\pi)$.
Middle: contour for $\rho>1$. 
The contour passes through the saddle point $S(\xi=i y_1)$.
Right: the contour for $\rho=1$ passes through the fourth order saddle point at $S(\xi = i\pi)$.}
\label{Fig:contours}
 \end{figure}

i) $0<\rho <1$. The saddle points are given by the solution of the
equation $h'(\xi)=0$.
There are three saddle points at $\xi = \{i\pi, i\pi \pm x_1\}$ with $x_1$
the solution of the equation $x_1 - \rho \sinh x_1=0$. 
The second derivative of $h$
at these points is $h''(i\pi) = 1 - \rho > 0$, and $h''(i\pi \pm x_1)=1-\rho
\cosh x_1 < 0$. 

The contour of integration is deformed from the real axis
to the contour shown in the left panel of Fig.~\ref{Fig:contours}, consisting
of three arcs of curves of steepest descent passing 
through the three saddle points. Along these arcs we have 
$Im[h(\xi)]= x (y - \pi) + \rho \sinh x \sin y = 0$. 
Denoting $\xi=x + iy$, the path is given by 
\begin{eqnarray}
y= \left\{
\begin{array}{cc}
\pi & \,, |x| \leq x_1 \\
y_0(x) & \,, |x| > x_1 \\
\end{array}
\right.
\end{eqnarray}
where $y_0(x)$ is the positive solution of the equation 
$\rho \sinh x \sin y = (\pi-y) x$.

The integral is a sum of three integrals along each piece of the path. 
The real part of the integrand is odd and the imaginary
part is even under $x \to -x$. This follows from noting that we have 
$\mbox{Re} h(-x+iy) =
\mbox{Re} h(x+iy)$ and $\sinh (x+iy) = \sinh x \cos y + i \sin y \cosh x$.
This implies that i) the integral from B to A vanishes because the integrand 
is odd, and ii) the real parts of the integrals along $(-\infty, B]$ and 
$[A,\infty)$ are equal and of opposite sign, and their imaginary parts are equal.
This gives
\begin{equation}\label{thetaIA}
\theta(\rho/t,t) = \frac{\rho}{\sqrt{2\pi^3 t^3}}e^{\frac{\pi^2}{2t}} 
\frac{1}{2i} \left( I_{(-\infty,B]} + I_{[A,\infty)} \right)
= \frac{\rho}{\sqrt{2\pi^3 t^3}}e^{\frac{\pi^2}{2t}} 
\mbox{Im} I_{[A,\infty)}\,.
\end{equation}

Thus it is sufficient to evaluate only $\mbox{Im} I_{[A,\infty)}$.
This integral is written as
\begin{equation}\label{2.10}
I_{[A,\infty)}(\rho,t) = \int_{A}^\infty e^{-\frac{1}{t} h(\xi)} \sinh \xi d\xi =
e^{-\frac{1}{t} h(A)} \int_0^\infty 
e^{-\frac{1}{t} \tau} \frac{\sinh \xi}{h'(\xi)} d\tau \,,
\end{equation}
where we defined $\tau = h(\xi) - h(A)$. This is expanded around $\xi=A$ as
\begin{equation}\label{tauexp}
\tau = h(\xi) - h(A) = a_2 (\xi-A)^2 + a_3 (\xi-A)^3 + a_4 (\xi-A)^4 + O((\xi-A)^5)
\end{equation}
with $a_2=\frac12 h''(A), a_3 = \frac16 h'''(A), \cdots$.
Noting $h''(A) = 1 - \rho \cosh x_1 <0$, this series is inverted as
\begin{equation}\label{xiexp}
\xi - A = -i \sqrt{\frac{2\tau}{|h''(A)|}} + O(\tau)
\end{equation}

The second factor in the integrand of (\ref{2.10}) is also expanded in $\xi-A$ as
\begin{equation}
\frac{\sinh \xi}{h'(\xi)} = \frac{\sinh A + \cosh A (\xi-A) + O((\xi-A)^2) }
{h''(A)(\xi-A) + O((\xi-A)^2) }
= \frac{\sinh A}{h''(A)} \frac{1}{\xi-A} \left(1 + O(\xi-A) \right)
\end{equation}
Substituting here the expansion (\ref{xiexp}) gives 
\begin{equation}
\frac{\sinh \xi}{h'(\xi)} =  i\frac{\sinh x_1}{|h''(A)|} 
\sqrt{\frac{|h''(A)|}{2}} \frac{1}{\sqrt{\tau}} + O(\tau^0)
\end{equation}

More generally 
\begin{equation}\label{Imgexp1}
g(\tau) = \frac{\sinh \xi}{h'(\xi)} = \frac{g_0}{\sqrt{\tau}} + g_1 + g_2 \sqrt{\tau} +
O(\tau)\,.
\end{equation}
The inequality $h''(A)<0$ implies that $g_0, g_2 , \cdots $ are imaginary,
while $g_1, g_3, \cdots $ are real. 

By Watson's lemma \cite{Olver}, the resulting expansion can be integrated term 
by term. The leading asymptotic contribution to $I_{[A,\infty)}(\rho,t)$ is 
\begin{eqnarray}
&& \mbox{Im} I_{[A,\infty)}(\rho,t) = e^{-\frac{1}{t}h(A)} 
\mbox{Im} \int_0^\infty e^{-\frac{1}{t} \tau} \frac{\sinh \xi}{h'(\xi)} d\tau \\
&& \qquad =
\frac{\sinh x_1}{\sqrt{2|h''(A)|}} e^{-\frac{1}{t}h(A)} 
\left( \sqrt{\pi t} + O(t^{3/2}) \right)\,. \nonumber
\end{eqnarray}
The leading order term integral was evaluated as
$\int_0^\infty e^{-\frac{1}{t}\tau}
\frac{d\tau}{\sqrt{\tau}}=\sqrt{\pi t}$.
Substituting into (\ref{thetaIA}) reproduces the quoted result by identifying 
$h(A)=F(\rho)$. 
Since $g_1, g_3, \cdots$ are real, the $O(\tau^0)$ term in (\ref{Imgexp1}) does not 
contribute to $Im I_{[A,\infty)}$, and the leading correction comes from
the $O(\tau^{1/2})$ term.
This is given in explicit form in the Appendix.

ii) $\rho > 1$. There are several saddle points along the imaginary axis. 
We are
interested in the saddle point at $\xi=i y_1$ with $0 < y_1\leq \pi$ 
the solution of (\ref{eqy1}). At this point the second derivative of $h$ is 
$h''(i y_1) = 1 + \rho \cos y_1 >0$. 

Deform the $\xi:(-\infty,+\infty)$ integration contour into the curve in
the middle panel of Fig. ~\ref{Fig:contours}. This is a steepest descent
curve $\mbox{Im}(h(\xi))=0$, given by $y_0(x)$, the positive solution of the equation 
$\rho \sinh x \sin y = (\pi-y) x$.
The contour passes through the saddle 
point $S$ at $\xi = i y_1$.

The integral $I_+(\rho,t) = \int_{-\infty}^S +\int_S^{+\infty}$
is the sum of the two integrals on the two halves of the contour.
As in the previous case, the sum is imaginary since $h(\xi)$ is real along 
the contour, and $Re[h(\xi)]$ is even in $x$. Noting that 
$\sinh(x+iy)=\sinh x \cos y + i \sin y \cosh x$, the first term 
is odd in $x$ and cancels out, and only the second (imaginary) term
gives a contribution. We have a result similar to (\ref{thetaIA})
\begin{equation}\label{thetaIS}
\theta(\rho/t,t) = \frac{\rho}{\sqrt{2\pi^3 t^3}}e^{\frac{\pi^2}{2t}} 
\frac{1}{2i} \left( I_{(-\infty,S]} + I_{[S,\infty)} \right)
= \frac{\rho}{\sqrt{2\pi^3 t^3}}e^{\frac{\pi^2}{2t}} 
\mbox{Im} I_{[S,\infty)}\,.
\end{equation}

As before, it is sufficient to evaluate only the $[S,\infty)$ integral, 
which is
\begin{equation}\label{IScase2}
I_{[S,\infty)}(\rho,t) = \int_{S}^\infty e^{-\frac{1}{t} h(\xi)} \sinh \xi d\xi =
e^{-\frac{1}{t} h(S)} \int_0^\infty 
e^{-\frac{1}{t} \tau} \frac{\sinh \xi}{h'(\xi)} d\tau
\end{equation}
where we introduced $\tau = h(\xi) - h(S) \geq 0$. 
This difference is expanded around $\xi=S$ as
\begin{equation}
\tau = h(\xi) - h(S) = \frac12 h''(S) (\xi-S)^2 + O((\xi-S)^3)
\end{equation}
which is inverted as (recall $h''(S)>0$)
\begin{equation}\label{ximexpS}
\xi - S = \sqrt{\frac{2\tau}{h''(S)}} + O(\tau)
\end{equation}
The integrand is also expanded in $\xi-S$ as
\begin{equation}
\frac{\sinh \xi}{h'(\xi)} = \frac{\sinh S + \cosh S (\xi-S) + O((\xi-S)^2) }
{h''(S)(\xi-S) + O((\xi-S)^2) }
= \frac{\sinh S}{h''(S)} \frac{1}{\xi-S} \left(1 + O((\xi-S)) \right)\,.
\end{equation}
Substituting here (\ref{ximexpS}) this can be expanded in powers of 
$\sqrt{\tau}$ as
\begin{equation}\label{Imgexp2}
\frac{\sinh \xi}{h'(\xi)} = \frac{g_0}{\sqrt{\tau}} + g_1 +
g_2 \sqrt{\tau}+ O(\tau) \,.
\end{equation}
As in the previous case, $g_0, g_2, \cdots $ are imaginary,
while $g_1, g_3, \cdots$ are real, such that only the even order terms
contribute.

By Watson's lemma \cite{Olver} we can exchange
expansion and integration, and we get the leading asymptotic term 
\begin{equation}
Im \int_0^\infty e^{-\frac{1}{t} \tau} \frac{\sinh \xi}{h'(\xi)} d\tau =
\frac{\sin y_1}{\sqrt{2 h''(S)}} \left( \sqrt{\pi t} + O(t^{3/2}) \right) \,.
\end{equation}
The $O(t^{3/2})$ subleading term is given in explicit form in the Appendix. 

iii) $\rho =1$. 
Some details of proof are different for $\rho=1$ since the saddle point $S$ at $\xi=i\pi$ is
of fourth order, so the Taylor expansion of $\tau$ starts with the fourth order term in $\xi-S$
\begin{equation}
\tau = h(\xi) - h(S) = \frac{1}{4!} h''''(S) (\xi - S)^4 + \cdots\,, \quad h''''(S)=-1
\end{equation}
This expansion in $z:= \xi - S$ can be put in closed form as
\begin{equation}
\tau = - \frac{1}{4!} z^4 - \frac{1}{6!} z^6 + O(z^8) = - \cosh z + 1 + \frac12 z^2\,.
\end{equation}
The inversion of this expansion gives
\begin{eqnarray}\label{Puiseux}
z = \xi - S &=&  (-24\tau)^{1/4} - \frac{1}{120} (-24 \tau)^{3/4}  
+ \frac{11}{67200} (-24 \tau)^{5/4}  + O(\tau^{7/4}) \,.
\end{eqnarray}

Deform the $\xi:(-\infty,+\infty)$ integration contour as in the right plot of Fig.~\ref{Fig:contours} along the steepest descent curve $\mbox{Im}(h(\xi))=0$. 
Denote this curve $y_0(x)$, where $y_0(x)$ is the solution satisfying $0<y<\pi$ 
of $\sinh x \sin y = (\pi - y) x$. As in the previous cases, it is sufficient to evaluate only the $[S,\infty)$ integral, 
which has the same form as (\ref{IScase2}).

The expansion of the integrand starts with an inverse quadratic term
\begin{equation}\label{integrand2}
\frac{\sinh \xi}{h'(\xi)} = \frac{\cosh S (\xi-S) + \cdots }{\frac{1}{3!} h''''(S) (\xi-S)^3 + \cdots} =
  \frac{6}{(\xi - S)^2 }  + O((\xi-S)^3)
\end{equation}
Substituting the derivatives $h^{(k)}(S)$ gives an explicit expression for the expansion in $z:=\xi-S$
\begin{equation}
\frac{\sinh \xi}{h'(\xi)} = \frac{- z - \frac{1}{3!} z^3 - \frac{1}{5!} z^5 - O(z^7)}{- \frac{1}{3!} z^3 - \frac{1}{5!} z^5 - O(z^7)} = \frac{\sinh z}{\sinh z - z} \,.
\end{equation}

Substituting the expansion (\ref{Puiseux}) into (\ref{integrand2}) gives an expansion of the form
\begin{equation}\label{tauexp2}
\frac{\sinh \xi}{h'(\xi)} = 
 \sqrt{\frac32} \frac{1}{\sqrt{-\tau}} + \frac45 + \frac{1}{35} \sqrt{\frac32} \sqrt{-\tau} + O(\tau)
\end{equation}
Along the integration path the square root is defined with a negative imaginary part $\sqrt{-\tau} = - i\sqrt{\tau}$,
which gives
\begin{equation}\label{Imgexp3}
\mbox{Im} \frac{\sinh \xi}{h'(\xi)} = \sqrt{\frac32} \frac{1}{\sqrt{\tau}} - \frac{1}{35} \sqrt{\frac32} \sqrt{\tau} + O(\tau^{3/2})\,.
\end{equation}
Thus the expansion is in powers of $\tau$, rather than fractional powers of $\tau$.

Application of Watson's lemma gives the expansion of the integral
\begin{equation}
\mbox{Im }\int_0^\infty e^{-\frac{1}{t}\tau} \frac{\sinh \xi}{h'(\xi)} d\tau 
=  e^{-\frac{1}{t} h(S)} \sqrt{\frac32 \pi t} \left(1 - \frac{1}{70} t + O(t^2) \right)
\end{equation}
with $h(S) = \frac12 \pi^2-1$. As in the previous cases, the expansion is in integer powers of $t$.
The leading term coincides with the $\rho\to 1$ limit of the above two cases.
The $O(t)$ subleading term reproduces the first term in Eq.~(\ref{tildeg2rho1exp}) in the Appendix.

\end{proof}

The higher order terms $O(t^j)$ with $j>2$ for case (iii) can be easily computed.

\begin{remark}
The expansion for $\rho=1$ takes a particularly simple form.
This is an alternating series, and the first three terms are
\begin{equation}
\theta(1/t,t) = \frac{\sqrt3}{2\pi t} e^{\frac{1}{t}} \left( 1 - \frac{1}{70} t + \frac{749,033}{1,034,880,000} t^2 + O(t^3)\right)
\end{equation}

\end{remark}

\subsection{Error estimates}

We examine here the error of the approximation for $\theta(r,t)$
obtained by keeping only the 
leading asymptotic term in Proposition~\ref{prop:1}, and give an explicit
error bound (\ref{errbound}), backed by the results of a numerical study. 
The bound is uniform in $\rho$. 

Define the remainder of the asymptotic expansion of Proposition~\ref{prop:1} as
\begin{equation}\label{varthetadef}
\theta(\rho/t,t)  = \frac{1}{2\pi t} G(\rho)e^{\frac{1}{t} [F(\rho) - \frac{\pi^2}{2}]}
\left(1 + \vartheta(t,\rho) \right) \,.
\end{equation}

First, we note that for all cases (i) $0<\rho<1$ (ii) $\rho>1$ and (iii) $\rho=1$,
the integrand in the application of Watson's lemma has a similar expansion
\begin{equation}\label{Imgexp}
\mbox{Im} g(\xi) = \frac{\mbox{Im} g_0(\rho)}{\sqrt{\tau}} + \mbox{Im} g_2(\rho) \sqrt{\tau} + O(\tau^{3/2})
\end{equation}
see (\ref{Imgexp1}), (\ref{Imgexp2}) and (\ref{Imgexp3}), respectively.
The coefficient of the leading order term is
\begin{equation}
\mbox{Im} g_0(\rho) = \left\{
\begin{array}{cc}
\frac{\sinh x_1}{\sqrt{2(\rho \cosh x_1 - 1)}} & \,, 0 < \rho < 1 \\
\sqrt{\frac32} & \,, \rho=1 \\
\frac{\sin y_1}{\sqrt{2(1+\rho \cos y_1)}} & \,, \rho > 1 \\
\end{array}
\right.
\end{equation}

We can obtain a bound on the remainder function $\vartheta(t,\rho)$ by 
examining the approximation error of the integrand $\mbox{Im }g(\xi)$ by the first term in its series expansion (\ref{Imgexp}). Denoting $\xi=x+i y$ we have
\begin{equation}
\mbox{Im } g(\xi) = \frac{x \cosh x \sin y + (\pi - y) \cos y \sinh x}
{(\rho \cosh x \sin y - \pi + y)^2 + (x + \rho \cos y \sinh x)^2} \,.
\end{equation}

Define 
\begin{equation}\label{img}
\mbox{Im}\, g(\xi(\tau)) := \frac{\mbox{Im} g_0(\rho)}{\sqrt{\tau}} (1 + \delta(\tau,\rho) )
\end{equation}

Numerical testing shows the following properties of the 
remainder function $\delta(\tau,\rho)$:

i) $\delta(\tau,\rho) \leq 0$ is negative

ii) $\delta(\tau,\rho) \to -1$ as $\tau \to \infty$.
This follows by expanding $Im g(\xi(\tau))$  as $x\to \infty$ along the steepest descent curves $y(x)$. 
For $\rho=1$ this curve has the asymptotics $y(x) \simeq 2\pi x e^{-x}$. 

Expressed in terms of $\tau$ we get
\begin{equation}
Im g(\xi(\tau)) = \frac{\pi}{\tau} + O(\tau^{-2})\,,\quad \tau \to \infty\,.
\end{equation}

iii) $|\delta(\tau,\rho) | \leq 1$.

iv) it is bounded in absolute value as 
\begin{equation}\label{deltabound}
|\delta(\tau,\rho)| \leq |\tilde g_2(\rho)| \tau \leq \frac{1}{35} \tau \,,\quad \tau \geq 0
\end{equation}
for all $\rho > 0$, where $\tilde g_2(\rho) := \frac{Im g_2(\rho)}{Im g_0(\rho)}$. 
An explicit result for $\tilde g_2(\rho)$ is given in Eq.~(\ref{tildeg2}) in the
Appendix, and its plot is shown in Fig.~\ref{Fig:errasympt0}.
Numerical study shows that $|\tilde g_2(\rho)|$ reaches a maximum value at
$\rho=1$, equal to $\frac{1}{35}$. This gives a uniform bound, valid for all $\rho>0$.

We can transfer these results to a bound on the error of the application of Watson's lemma. 

\begin{remark}
The error term $\vartheta(t,\rho)$ in (\ref{varthetadef}) is bounded from above as
\begin{equation}\label{errbound}
|\vartheta(t, \rho)| \leq \frac12\cdot \frac{|Im g_2(\rho)|}{|Im g_0(\rho)|} t 
\leq \frac{1}{70} t\,,
\end{equation}
with $g_0(\rho), g_2(\rho)$ the coefficients in the series expansion (\ref{Imgexp}). The ratio in this bound is denoted as $\frac{|Im g_2(\rho)|}{|Im g_0(\rho)|} = \tilde g_2(\rho)$ where $\tilde g_2(\rho)$ is given in explicit form in Eq.~(\ref{tildeg2}) in the Appendix. It reaches its maximum (in absolute value) at $\rho=1$ with 
$\tilde g_2(1) = -\frac{1}{35}$.

Using the property (iii) $|\delta(\tau,\rho)|\leq 1$, this bound can be improved as
\begin{equation}
|\vartheta(t,\rho)| \leq \min\left(\frac{1}{70} t,1 \right)\,.
\end{equation}

\end{remark}

\begin{proof}

The approximation error introduced by keeping only the leading order term in the application of Watson's lemma is estimated as
\begin{equation}
\int_0^\infty e^{-\frac{1}{t} \tau} Im g(\tau) d\tau =
\int_0^\infty e^{-\frac{1}{t} \tau} \frac{g_0(\rho)}{\sqrt{\tau}} (1 + \delta(\tau, \rho)) 
d\tau:=
Im g_0(\rho) \sqrt{\pi t} ( 1 + \vartheta(t,\rho) )
\end{equation}
In the second equality we substituted (\ref{img}). 
Using the bound (\ref{deltabound}) gives the quoted result.
\end{proof}





\subsection{Properties of the functions $F(\rho)$ and $G(\rho)$}
\label{sec:FG}

Let us examine in more detail the properties of the functions $F(\rho)$
and $G(\rho)$ appearing in Proposition \ref{prop:1}. 
We start by studying the behavior of the solutions of the equations
(\ref{eqx1}) and (\ref{eqy1}) for $x_1,y_1$ respectively. For $0 < \rho \leq 1$
the solution of (\ref{eqx1}) approaches $x_1 \to 0$ as $\rho \to 1$ and
it increases to infinity as $\rho \to 0$. For $\rho \geq 1$, the solution 
of (\ref{eqy1}) starts at $y_1 =\pi$ for $\rho \to 1$, and decreases to
zero as $\rho \to \infty$.

The derivative $F'(\rho)$ can be computed exactly
\begin{eqnarray}\label{Fpsol}
F'(\rho) = \left\{
\begin{array}{cc}
- \cosh x_1 \,, & 0 < \rho < 1 \\
\cos y_1  \,, & \rho > 1 \\
\end{array}
\right.
\end{eqnarray}

This shows that the minimum of $F(\rho)$ is reached for that value of $\rho>1$
for which $y_1=\frac{\pi}{2}$. From (\ref{eqy1}) it follows that this is reached at
$\rho=\frac{\pi}{2}$, and $F(\pi/2)=\frac{3\pi^2}{8}$.

We can obtain also the asymptotics of $F(\rho)$ for very small and very large
arguments.

\begin{proposition}\label{prop:F}

i) As $\rho\to 0$ the function $F(\rho)$ has the asymptotics
\begin{equation}
F(\rho) = \frac12 L^2 + L \log (2L) -
L + \log^2(2L) + \frac{\pi^2}{2}+ o(1)
\end{equation}
with $L=\log(1/\rho)$.

ii) As $\rho\to \infty$ the function $F(\rho)$ has the asymptotics 
\begin{equation}\label{Frhoexp}
F(\rho) =  \rho + \frac{\pi^2}{2(1+\rho)} + O(\rho^{-3})\,.
\end{equation}
 
iii) Around $\rho=1$, the function $F(\rho)$ has the expansion
\begin{eqnarray}\label{F1exp}
F(\rho) &=& \frac{\pi^2}{2} - 1 - (\rho - 1) + \frac32 (\rho - 1)^2 
- \frac65 (\rho-1)^3 + \frac{351}{350} (\rho-1)^4 \\
&& - \frac{108}{125}(\rho - 1)^5 + \frac{372903}{490000}(\rho-1)^6 
+ O((\rho-1)^7)\,.\nonumber
\end{eqnarray}

\end{proposition}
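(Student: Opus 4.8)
The plan is to treat the three regimes uniformly: in each case substitute the asymptotics of the saddle location ($x_1$ for $\rho<1$, $y_1$ for $\rho>1$) into the closed form (\ref{Fsol}). The computation is streamlined by two observations. First, squaring (\ref{eqx1}) gives $\rho\cosh x_1=\sqrt{x_1^2+\rho^2}$, so for $0<\rho<1$ one has $F(\rho)=\tfrac12 x_1^2-\sqrt{x_1^2+\rho^2}+\tfrac{\pi^2}{2}$. Second, setting $\eta=\pi-y_1$ turns (\ref{eqy1}) into $\rho\sin\eta=\eta$, i.e.\ $\rho=\eta/\sin\eta$, and a short computation gives $F(\rho)=\tfrac{\pi^2}{2}-\tfrac12\eta^2-\eta\cot\eta$ for $\rho>1$. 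In both cases the relation between $\rho$ and the saddle is $\rho=\zeta/\sinh\zeta$ with $\zeta=x_1$ real for $0<\rho<1$ and $\zeta=i\eta$ for $\rho>1$, and $F(\rho)-F(1)=\tfrac12\zeta^2-\zeta\coth\zeta+1$; this will be the basis of part (iii).

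For part (i) I would use that (\ref{eqx1}) forces $x_1\to\infty$ as $\rho\to 0$. From $\rho\sinh x_1=x_1$ one gets $e^{x_1}=\tfrac{2x_1}{\rho}\bigl(1+O(e^{-2x_1})\bigr)$, hence the fixed-point equation $x_1=L+\log(2x_1)+O(e^{-2x_1}/x_1)$ with $L=\log(1/\rho)$. I would solve this by successive substitution, $x_1=L+\log(2L)+\varepsilon$ with $\varepsilon=\log\!\bigl(1+\tfrac{\log(2L)+\varepsilon}{L}\bigr)=O(\tfrac{\log L}{L})$, carrying the iteration far enough that, after forming $\tfrac12 x_1^2-x_1$ (note $\sqrt{x_1^2+\rho^2}=x_1+O(\rho^2)$, so the square root contributes only exponentially small corrections), every term that survives the limit is captured and all $O(\tfrac{\log^2 L}{L})$ remainders are absorbed into $o(1)$. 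Collecting $\tfrac12 x_1^2-x_1+\tfrac{\pi^2}{2}$ then produces the stated expansion in $L$ and $\log(2L)$.

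Part (ii) is a routine power-series inversion: as $\rho\to\infty$, (\ref{eqy1}) gives $y_1\to 0$, and from $\rho\sin y_1=\pi-y_1$ a Lagrange-type inversion yields $y_1=\tfrac{\pi}{\rho}-\tfrac{\pi}{\rho^2}+O(\rho^{-3})$; substituting into $F(\rho)=-\tfrac12 y_1^2+\rho\cos y_1+\pi y_1$ and expanding $\cos y_1$ gives $F(\rho)=\rho+\tfrac{\pi^2}{2\rho}-\tfrac{\pi^2}{2\rho^2}+O(\rho^{-3})$, which is (\ref{Frhoexp}) after resumming $\tfrac{\pi^2}{2}\bigl(\tfrac1\rho-\tfrac1{\rho^2}+\cdots\bigr)=\tfrac{\pi^2}{2(1+\rho)}$. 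Equivalently one can integrate the exact derivative $F'(\rho)=\cos y_1$ from $\rho=\infty$, using $1-\cos y_1=\tfrac{y_1^2}{2}-\cdots$ with $y_1$ as above. For part (iii) I would expand the relations of the first paragraph in powers of $\zeta^2$: $1/\rho=\sinh\zeta/\zeta=1+\tfrac{\zeta^2}{6}+\tfrac{\zeta^4}{120}+\cdots$ and, via $\rho\cosh x_1=x_1\coth x_1=1+\tfrac{\zeta^2}{3}-\tfrac{\zeta^4}{45}+\cdots$ (the even Bernoulli series), $F(\rho)-F(1)=\tfrac{\zeta^2}{6}+\tfrac{\zeta^4}{45}-\cdots$. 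Inverting the series $\rho-1=-\tfrac{\zeta^2}{6}+\tfrac{7}{360}\zeta^4-\cdots$ for $\zeta^2$ and composing yields (\ref{F1exp}) term by term; moreover both branches ($\zeta$ real and $\zeta$ imaginary) produce the same power series in $\rho-1$, which re-proves that $F$ is real-analytic at $\rho=1$.

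The main obstacle is part (i): the effective expansion parameter there is $1/\log(1/\rho)$, which decays extremely slowly, so one has to be scrupulous about carrying the fixed-point iteration for $x_1$ to sufficiently high order before squaring, and about which nested-logarithm terms are genuinely absorbed into the $o(1)$. Parts (ii) and (iii) reduce to mechanical series manipulations once the reparametrizations above are in place.
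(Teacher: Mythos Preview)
Your approach is essentially the same as the paper's: in each regime you derive the asymptotics of the saddle location from the defining equation and substitute into a simplified closed form of $F$. The paper writes the $0<\rho<1$ branch as $F(\rho)=\tfrac12 x_1^2 - x_1/\tanh x_1 + \tfrac{\pi^2}{2}$, which is identical to your $\tfrac12 x_1^2-\sqrt{x_1^2+\rho^2}+\tfrac{\pi^2}{2}$ since $\rho\cosh x_1=x_1\coth x_1$; for (ii) it expands $y_1=\tfrac{\pi}{1+\rho}+\cdots$ (your $\tfrac{\pi}{\rho}-\tfrac{\pi}{\rho^2}+\cdots$ rewritten) and substitutes; for (iii) it expands $x_1^2$ in powers of $\rho-1$ from the $\rho<1$ side and remarks that the $\rho>1$ side gives the same series.

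Two minor points are worth noting. Your unified parametrization $\rho=\zeta/\sinh\zeta$, $F(\rho)-F(1)=\tfrac12\zeta^2-\zeta\coth\zeta+1$ with $\zeta=x_1$ or $\zeta=i\eta$ is a cleaner way to organize (iii) than the paper's separate treatment of the two branches, and it makes the real-analyticity at $\rho=1$ transparent rather than an a posteriori observation. Conversely, in (i) you correctly flag that the two-term approximation $x_1\approx L+\log(2L)$ is not by itself enough, because $L\varepsilon$ is of order $\log(2L)$ and interacts with the $-\log(2L)$ coming from $-x_1$; the paper's proof simply writes ``substituting\ldots gives the quoted result'' and does not display this cancellation, so your plan to carry the fixed-point iteration one step further is the right level of care.
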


\begin{proof}

i) As $\rho\to 0$, the solution of the equation (\ref{eqx1}) approaches
$x_1\to \infty$ as
\begin{equation}\label{x1asympt}
x_1 = L + \log(2L) +o(1)
\end{equation}
with $L=\log(1/\rho)$. This follows from writing Eq.~(\ref{eqx1}) as
\begin{equation}
\frac{1}{\rho} = \frac{\sinh x_1}{x_1} = \frac{e^{x_1}}{2x_1}(1-e^{-2x_1})
\end{equation}
Taking the logs of both sides gives 
\begin{equation}
x_1 = L + \log(2x_1) - \log(1-e^{-2x_1}) \,.
\end{equation}
This can be iterated starting with the zero-th approximation $x_1^{(0)}=L$,
which gives (\ref{x1asympt}).

Eliminating $\rho$ in terms
of $x_1$ using (\ref{eqx1}) gives
\begin{equation}\label{F1alt}
F(\rho) = \frac12 x_1^2 - \frac{x_1}{\tanh x_1} + \frac{\pi^2}{2}
\end{equation}
Substituting (\ref{x1asympt}) into this expression gives the quoted result.

ii) As $\rho \to \infty$, the solution of the equation (\ref{eqy1})
approaches $y_1\to 0$. This equation is approximated as
\begin{equation}
(1+\rho) y_1 + \frac16 \rho y_1^3 + \rho O(y_1^5) = \pi
\end{equation}
which is inverted as
\begin{equation}\label{y1asympt}
y_1 = \frac{\pi}{1+\rho} - \frac{\pi^3}{6} \frac{\rho}{(1+\rho)^4} + O(\rho^{-5})
\end{equation}
Substituting into $F(\rho)=-\frac12 y_1^2 + \rho \cos y_1 + \pi y_1$ 
gives the result quoted above.

iii) Consider first the case $0<\rho\leq 1$. As $\rho \to 1$, we have
$x_1\to 0$. From (\ref{eqx1}) we get an expansion $x_1^2=-6(\rho-1)
+ \frac{21}{5} (\rho-1)^2 - \frac{564}{175}(\rho-1)^3 + O((\rho-1)^4)$.
Substituting into (\ref{F1alt}) gives the result (\ref{F1exp}). 
The same result result is obtained for the $\rho\geq 1$ case, when $\rho=1$ 
is approached from above.

\end{proof}


We give next the asymptotics of $G(\rho)$. 

\begin{proposition}\label{prop:G}

i) As $\rho\to 0$ the asymptotics of $G(\rho)$ is
\begin{equation}
G(\rho) = \sqrt{L} - \rho^2 \sqrt{L} + \frac{\log 2L + 1}{2\sqrt{L}} + o(1)\,,
\quad L := \log(1/\rho)\,.
\end{equation}

ii) As $\rho\to \infty$ we have
\begin{equation}
G(\rho) = \frac{\pi \rho}{(1+\rho)^{3/2}} + O(\rho^{-5/2})\,.
\end{equation}

iii) Around $\rho=1$ the function $G(\rho)$ has the following expansion
\begin{eqnarray}\label{Gexp}
G(\rho) &=& \sqrt3 \left\{ 1 + \frac15 \left(\frac{1}{\rho} - 1\right) 
- \frac{4}{35} \left(\frac{1}{\rho} - 1\right)^2 
+ \frac{2}{25}
\left(\frac{1}{\rho} - 1\right)^3 + O(\left(\frac{1}{\rho} - 1\right)^4)
\right\}\,.
\end{eqnarray}

\end{proposition}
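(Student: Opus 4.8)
The plan is to eliminate the implicitly defined quantities using the saddle equations (\ref{eqx1})--(\ref{eqy1}), so that $G(\rho)$ becomes a function of a single small parameter near each endpoint, and then to feed in the asymptotics of $x_1$ and $y_1$ already established in the proof of Proposition \ref{prop:F}. For $0<\rho<1$, the relation $\rho\sinh x_1 = x_1$ collapses the numerator of (\ref{Gsol}) to $x_1$, and since $\cosh x_1 = \sqrt{1+\sinh^2 x_1} = \frac{1}{\rho}\sqrt{\rho^2+x_1^2}$ one gets $\rho\cosh x_1 - 1 = \sqrt{\rho^2+x_1^2}-1 = x_1\coth x_1 - 1$, hence
\[
G(\rho) = \frac{x_1}{\sqrt{x_1\coth x_1 - 1}}\,,\qquad 0<\rho<1\,.
\]
Likewise, for $\rho>1$ equation (\ref{eqy1}) gives $\rho\sin y_1 = \pi - y_1$, and writing $v := \pi - y_1$ one finds $\rho = v/\sin v$ and $1+\rho\cos y_1 = 1 - v\cot v$, so that
\[
G(\rho) = \frac{v}{\sqrt{1 - v\cot v}}\,,\qquad \rho>1\,.
\]
Both right-hand sides are even in $x_1$, resp. $v$, and since $x_1^2$ (resp. $v^2$) is an analytic function of $\rho$ near $\rho=1$, each is an analytic function of $\rho$ on its side of $\rho=1$.

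For part (ii) I would substitute the expansion $y_1 = \frac{\pi}{1+\rho} - \frac{\pi^3}{6}\frac{\rho}{(1+\rho)^4} + O(\rho^{-5})$ from Proposition \ref{prop:F}(ii) into $G(\rho) = (\pi - y_1)/\sqrt{1+\rho\cos y_1}$: the numerator becomes $\frac{\pi\rho}{1+\rho} + O(\rho^{-3})$, and $\cos y_1 = 1 - \frac12 y_1^2 + O(y_1^4)$ gives $1+\rho\cos y_1 = (1+\rho)(1 + O(\rho^{-2}))$, so $G(\rho) = \frac{\pi\rho}{(1+\rho)^{3/2}} + O(\rho^{-5/2})$. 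For part (i) I would substitute $x_1 = L + \log(2L) + o(1)$ from Proposition \ref{prop:F}(i) into $G(\rho) = x_1/\sqrt{x_1\coth x_1 - 1}$; since $\coth x_1 = 1 + 2e^{-2x_1} + O(e^{-4x_1})$ and $\rho^2 = 4x_1^2 e^{-2x_1}(1+o(1))$, the denominator is $x_1-1$ up to a correction of order $\rho^2$, and expanding $\frac{x_1}{\sqrt{x_1-1}} = \sqrt{x_1} + \frac{1}{2\sqrt{x_1}} + O(x_1^{-3/2})$ together with $\sqrt{x_1} = \sqrt{L} + \frac{\log 2L}{2\sqrt{L}} + o(1/\sqrt{L})$ reproduces $\sqrt{L} + \frac{\log 2L + 1}{2\sqrt{L}}$ plus the exponentially small, $\rho$-dependent correction.

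Part (iii) is the most computational. The natural variable is $u = 1/\rho - 1$ because, from the $\rho<1$ side, $1/\rho = \sinh x_1/x_1 = \sum_{n\ge 0}\frac{x_1^{2n}}{(2n+1)!}$ has a clean power series in $w := x_1^2$. I would (a) reverse this series to obtain $w = 6u - \frac{9}{5}u^2 + \frac{144}{175}u^3 + \cdots$; (b) expand $x_1\coth x_1 - 1 = \frac{w}{3}\bigl(1 - \frac{w}{15} + \frac{2w^2}{315} - \cdots\bigr)$, so that $G = \sqrt3\bigl(1 - \frac{w}{15} + \cdots\bigr)^{-1/2} = \sqrt3\bigl(1 + \frac{w}{30} - \frac{19w^2}{12600} + \cdots\bigr)$; and (c) compose the two series, which yields $G(\rho) = \sqrt3\bigl(1 + \frac15 u - \frac{4}{35}u^2 + \frac{2}{25}u^3 + O(u^4)\bigr)$. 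Finally I would check consistency from the $\rho>1$ side: there $1 - v\cot v = \frac{v^2}{3}\bigl(1+\frac{v^2}{15}+\cdots\bigr)$ gives $G = \sqrt3\bigl(1 - \frac{v^2}{30}+\cdots\bigr)$, and $1/\rho = \sin v/v$ gives $u = -\frac{v^2}{6}+\cdots$, whence $G = \sqrt3\bigl(1+\frac{u}{5}+\cdots\bigr)$ in agreement; this also re-derives $G(1)=\sqrt3$.

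I do not expect a conceptual obstacle: the content is the simplification via the saddle equations, after which everything reduces to substitution into the known $x_1,y_1$ asymptotics. The genuinely error-prone step is the series reversion and composition in part (iii) — obtaining the rational coefficients $-\frac95$, $\frac{144}{175}$, $-\frac{19}{12600}$ and finally $\frac15,-\frac4{35},\frac2{25}$ — which is best cross-checked against the independent $\rho>1$ parametrization. A minor subtlety in part (i) is that the advertised $\rho$-dependent term is exponentially small in $L$, so its precise coefficient requires carrying the $O(e^{-2x_1})$ corrections in both $\coth x_1$ and the $\rho$–$x_1$ relation.
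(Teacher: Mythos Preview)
Your approach is essentially the same as the paper's: eliminate $\rho$ via the saddle equations to write $G(\rho)=x_1/\sqrt{x_1\coth x_1-1}$ (the paper writes it as $x_1/\sqrt{x_1/\tanh x_1 - 1}$) and $G(\rho)=(\pi-y_1)/\sqrt{1+(\pi-y_1)/\tan y_1}$, then substitute the asymptotics of $x_1$ and $y_1$ already obtained in Proposition~\ref{prop:F}. Your treatment of part (iii) in the variable $u=1/\rho-1$ is slightly more direct than the paper's (which expands in $\rho-1$ as in Proposition~\ref{prop:F}(iii) and only sketches the argument), and your cross-check from the $\rho>1$ side is an addition not in the paper, but the underlying method is identical.
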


\begin{proof}

i) The asymptotic expansion of $x_1\to \infty$ for $\rho\to 0$ was already
obtained in (\ref{x1asympt}). Substitute this into 
\begin{equation}
G(\rho) = \frac{x_1}{\sqrt{\frac{x_1}{\tanh x_1}-1}}
\end{equation}
which follows from (\ref{Gsol}) by eliminating $\rho$ using (\ref{eqx1}).
Expanding the result gives the result quoted.

ii) 
Eliminating $\rho$ from the expression (\ref{Gsol}) for $G(\rho)$ for 
$\rho\geq 1$ gives
\begin{equation}
G(\rho) = \frac{\pi - y_1}{\sqrt{1 + \frac{\pi-y_1}{\tan y_1}}}
\end{equation}
Using here the expansion of $y_1$ from (\ref{y1asympt}) gives the result quoted.

iii) The proof is similar to that of point (iii) in Proposition \ref{prop:F}.
\end{proof}


\section{Numerical tests}
\label{sec:3}

The leading term of the expansion in Proposition \ref{prop:1} can be
considered as an approximation for $\theta(r,t)$ for all $t\geq 0$, 
by substituting $\rho=rt$.
Consider the approximation for $\theta(r,t)$ defined as
\begin{equation}\label{tapprox}
\hat \theta(r,t) := \frac{1}{2\pi t} 
G(rt) e^{-\frac{1}{t}(F(rt) - \frac{\pi^2}{2})}\,.
\end{equation}

We would like to compare this approximation with the leading asymptotic 
expansion of \cite{Gerhold2011}, which is obtained by taking the limit $t\to 0$ 
at fixed $r$.
This is given by Theorem 1 in \cite{Gerhold2011}
\begin{equation}\label{G4}
\theta_G(r,t)=\frac{\sqrt{e}}{\pi} \sqrt{\frac{u_0(t)}{\log u_0(t)-2-2\rho}}
e^{-t u_0(t) + \sqrt{2u_0(t)}}
\end{equation}
where $u_0(t)$ is the solution of the equation
\begin{equation}
t = \frac{\log u}{2\sqrt{2u}} - \frac{\rho}{\sqrt{2u}} + \frac{1}{4u}\,,\quad 
\rho := \log\frac{r}{2\sqrt2}\,.
\end{equation}
The correction to Eq.~(\ref{G4}) is of order $ 1 + O(\sqrt{t} \log^2(1/t))$.

Table~\ref{tab:2} shows the numerical evaluation of 
$\hat \theta(r,t)$ for $r=0.5$ and several values of $t$, 
comparing also
with the small $t$ expansion $\theta_G(r,t)$ in Eq.~(\ref{G4}).
They agree well for sufficiently small $t$ but start to diverge for 
larger $t$. In the last column of Table~\ref{tab:2} we show also the result
of a direct numerical evaluation of $\theta(r,t)$ by numerical integration
in \textit{Mathematica}. 

\begin{table}
\caption{\label{tab:2} 
Numerical evaluation of $\theta(0.5,t)$ using the asymptotic expansion
of Proposition \ref{prop:1} $\hat\theta(0.5,t)$ and the Gerhold approximation
$\theta_G(0.5,t)$ given in (\ref{G4}). The last column shows the result
of a direct numerical evaluation using numerical integration in 
\textit{Mathematica}.}
\begin{center}
\begin{tabular}{|c|cccc|cc|c|}
\hline
$t$ & $\rho$ & $x_1$ & $F(\rho)$ & $\hat\theta(0.5,t)$ 
    & $u_0(t)$ & $\theta_G(0.5,t)$ & $\theta_{\rm num}(0.5,t)$ \\
\hline\hline
0.1 & 0.05 & 5.3697 & 13.9816 & $2.098 \cdot 10^{-39}$ 
    & 1447.8 & $2.101\cdot 10^{-39}$ & - \\
0.2 & 0.1  & 4.4999 & 10.5584 & $1.176 \cdot 10^{-12}$ 
    & 256.3 & $1.181\cdot 10^{-12}$ & $1.173\cdot 10^{-12}$ \\
0.3 & 0.15  & 3.9692 & 8.84 & $2.713 \cdot 10^{-6}$ 
    & 89.713 & $2.738\cdot 10^{-6}$ & $2.704\cdot 10^{-6}$ \\
0.5 & 0.25 & 3.2638 & 6.9876  & 0.0114 
    & 22.69 & 0.0116 & 0.0113 \\
1.0 & 0.5  & 2.1773 & 5.0712  & 0.2722 
    & 3.1345 & 0.3062 & 0.2685 \\
1.5 & 0.75 & 1.3512 & 4.3023  & 0.2960  
    & 0.9531 & 0.4097 & 0.2900 \\
2.0 & 1.0  & 0.0 & 3.9348     & 0.2300 
    & 0.4271 & 0.4690 & 0.2213 \\
\hline
$t$ & $\rho$ & $y_1$ & $F(\rho)$ & $\hat\theta(0.5,t)$ 
    & $u_0(t)$ & $\theta_G(0.5,t)$ & $\theta_{\rm num}(0.5,t)$ \\
\hline
2.5 & 1.25  & 2.0105 & 3.7630 & 0.1682 & 0.2430 & 1.2541 & 0.1628 \\
3.0 & 1.5   & 1.6458 & 3.7037 & 0.127  & 0.1607 & - & 0.1222 \\
10.0 & 5.0 & 0.5459 & 5.8393 & 0.0164  & 0.0234 & - & 0.0151 \\
\hline
\end{tabular}
\end{center}
\end{table}

Figure~\ref{Fig:3G} compares the approximation $\hat \theta(r,t)$ (black
curves) against $\theta_G(r,t)$ (blue curves) and direct numerical
integration (red curves). We note the same pattern of good agreement
between $\hat \theta(r,t)$ and $\theta_G(r,t)$ at small $t$, and increasing
discrepancy for larger $t$, which explodes to infinity for sufficiently
large $t$. The reason for this
explosive behavior is the fact that the denominator in Eq.~(\ref{G4}) approaches
zero as $t$ approaches a certain maximum value. For $t$ larger than 
this value the denominator becomes negative and the approximation ceases to
exist.

This maximum $t$ value is given by the inequality
$u_0(t) < e^{-2-2\log(r/(2\sqrt{2}))}=\frac{8}{r^2 e^2}$.
For $r=0.5$ this condition imposes the upper bound $t< t_{\rm max}=2.5538$. 

\begin{figure}
    \centering
   \includegraphics[width=1.7in]{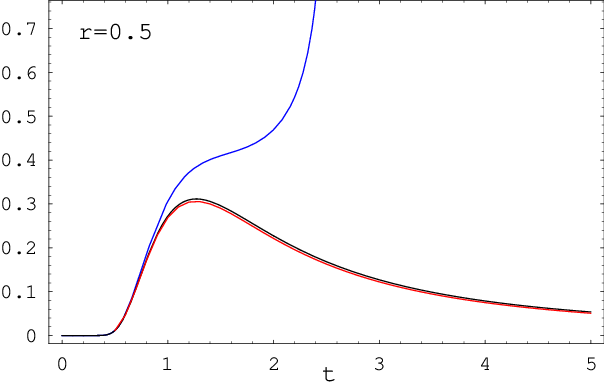}
   \includegraphics[width=1.7in]{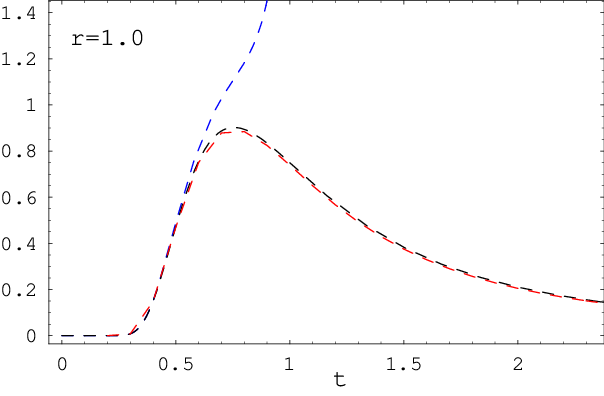}
   \includegraphics[width=1.7in]{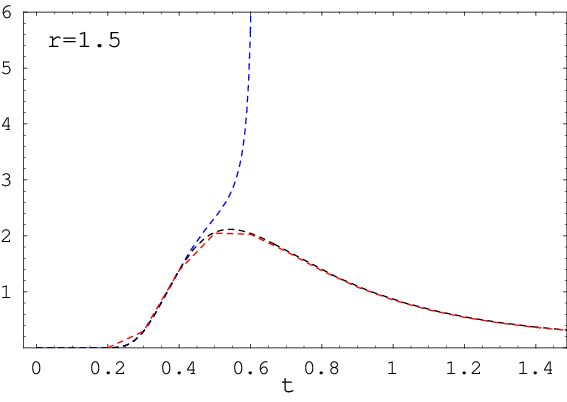}
    \caption{ Plot of $\hat\theta(r,t)$ vs $t$ from the asymptotic 
expansion of Proposition \ref{prop:1} (black) and from the Theorem 1
of Gerhold \cite{Gerhold2011} (blue). The red curves show the results
of direct numerical integration of $\theta(r,t)$.
The three panels correspond to the three values of $r=0.5, 1.0,1.5$. }
\label{Fig:3G}
 \end{figure}

\begin{figure}
    \centering
      \includegraphics[width=4in]{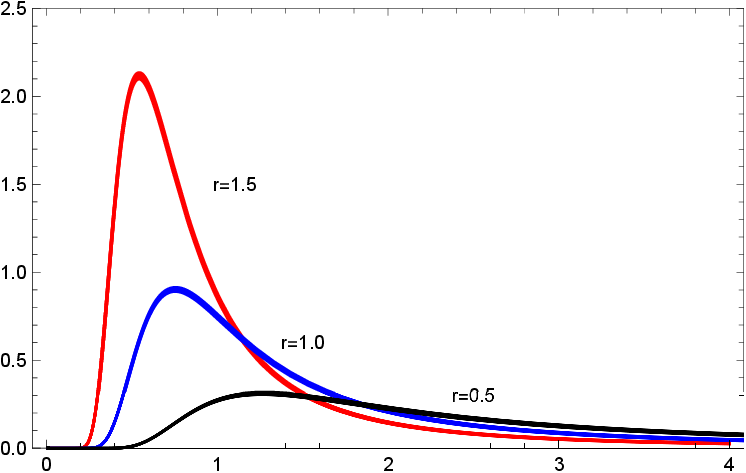}
    \caption{ Plot of $\hat \theta(r,t)$ vs $t$ defined in (\ref{tapprox})
giving the leading asymptotic result 
from Proposition \ref{prop:1} for three values of $r=0.5, 1.0,1.5$
(solid, dashed, dotted). The widths of the curves reflect the error
bound (\ref{errbound}).}
\label{Fig:3t}
 \end{figure}

We show in Figure \ref{Fig:3t} plots of $\hat \theta(r,t)$ vs $t$ at 
$r=0.5,1.0,1.5$. The vertical scale of the plot is chosen as in Figure 1 
(left) of Bernhart and Mai \cite{Bernhart2015}, which shows the plots of 
$\theta(r,t)$ for the same parameters, obtained using a precise
numerical inversion of the Laplace transform of $\theta(r,t)$.
The shapes of the curves are very similar with those shown in \cite{Bernhart2015}.


\subsection{Asymptotics for $t\to 0$ and $t\to \infty$ of the approximation
$\hat \theta(r,t)$}

We study in this Section the asymptotics of the approximation 
$\hat \theta(r,t)$ defined in (\ref{tapprox}) for $t\to 0$ and $t\to \infty$, 
and compare with the exact asymptotics of $\theta(r,t)$ in these limits 
obtained by Barrieu et al \cite{Barrieu2004} and Gerhold \cite{Gerhold2011}. 

As expected, the $t\to 0$ asymptotics matches precisely
the exact asymptotics of $\theta(r,t)$.
Although the approximation $\hat \theta(r,t)$ was derived in the small $t$
limit, it is somewhat surprising that it matches also the correct asymptotics
of $\theta(r,t)$ for $t\to \infty$, up to a multiplicative coefficient,
which however becomes exact as $r\to \infty$.

\textit{Small $t$ asymptotics $t\to 0$.}
The leading asymptotics of $\theta(r,t)$ as $t\to 0$ was obtained 
in Barrieu et al \cite{Barrieu2004} 
\begin{equation}
\theta(r,t) \sim e^{-\frac{1}{2t} \log^2 t} \,.
\end{equation}
This was refined further by Gerhold as in Eq.~(\ref{G4}). 
Using the small $t$ approximation $u_0(t) = \frac{1}{2t^2} \log^2(1/t)$
(see Eq.~(6) in Gerhold's paper), the improved expansion (\ref{G4}) gives
\begin{equation}
\theta_G(r,t) \sim \frac{\log(1/t)}{t} 
\frac{1}{\sqrt{2\log\log(1/t)+2\log(2/t)}}
e^{-\frac{1}{2t} \log^2(1/t)} \,.
\end{equation}

The $t\to 0$ expansion of $\hat \theta(r,t)$ can be obtained using
the $\rho\to 0$ asymptotics of $F(\rho),G(\rho)$ in points (i) of the
Propositions~\ref{prop:F} and \ref{prop:G}, respectively. 
This gives
\begin{equation}
\hat\theta(r,t) \sim \frac{1}{t} \sqrt{\log 1/(rt)} 
e^{-\frac{1}{2t} \log^2(r t) - \frac{1}{t} \log(1/(rt)) \log(2 \log(1/(rt))
+ \frac{1}{t} \log(1/(rt))}\,,\quad t\to 0
\end{equation}
The exponential factor agrees with the asymptotics of the exact result.
Also the leading dependence of the multiplying factor reproduces the 
improved expansion following from \cite{Gerhold2011}.

\textit{Large $t$ asymptotics $t\to \infty$.}
The $t\to \infty$ asymptotics of $\theta(r,t)$ is given in Remark 3 
in Barrieu et al \cite{Barrieu2004} as
\begin{equation}\label{larget}
\theta(r,t) \sim c_r \frac{1}{t^{3/2}}\,,\quad
c_r = \frac{1}{\sqrt{2\pi}} K_0(r) \,.
\end{equation}

The large $t$ asymptotics of $\hat \theta(r,t)$ is related to the 
$\rho\to \infty$ asymptotics of $F(\rho),G(\rho)$.
As $\rho\to \infty$ we have $F(\rho) \sim \rho$ from Prop.~\ref{prop:F} 
point (ii)
and $G(\rho) \sim \frac{\pi \rho}{(1+\rho)^{3/2}}$ from Prop.~\ref{prop:G} 
point (ii).
Substituting into (\ref{tapprox}) and keeping only the leading
contributions as $t\to \infty$ gives
\begin{equation}\label{51}
\hat \theta(r,t) \sim \frac{r}{2 (1+rt)^{3/2}} e^{-r} \sim
\frac{1}{2\sqrt{r}} t^{-3/2} e^{-r} \,.
\end{equation}
The $t$ dependence has the same form as the exact asymptotics (\ref{larget}). 

Let us examine also the coefficient of $t^{-3/2}$ in (\ref{51}) and
compare with the exact result for $c_r$ in (\ref{larget}).
The leading asymptotics of the $K_0(x)$ function for $x\to \infty$
is $K(x) = e^{-x} \sqrt{\frac{\pi}{2x}}(1+O(1/x))$. The exact coefficient
becomes, for $r\to \infty$
\begin{equation}
c_r = \frac12 \frac{1}{\sqrt{r}} e^{-r} + O(1/r)
\end{equation}
The leading term in this expansion matches precisely the coefficient obtained 
from $\hat\theta(r,t)$. 
We conclude that the right tail asymptotics of $\hat \theta(r,t)$ approaches
the same form as the exact asymptotics in the limit $r \to \infty$.

\section{Application: The asymptotic distribution of $\frac{1}{t} A_t^{(\mu)}$}
\label{sec:4}

As an application of the result of Proposition~\ref{prop:1}, we give here
the leading $t\to 0$ asymptotics of the distribution of the time average of the
gBM $\frac{1}{t} A_t^{(\mu)}$.

The distribution of $\frac{1}{t} A_t^{(\mu)}$ can be obtained from Yor's formula 
(\ref{AB}) by integration over $x$.
Introducing a new variable $u = a t$ this is expressed as the integral
\begin{equation}\label{A}
\mathbb{P}\left(\frac{1}{t} A_t^{(\mu)} \in da \right) = 
\int_{x\in \mathbb{R}} e^{\mu x - \frac12 \mu^2 t}
\exp\left( - \frac{1+e^{2x}}{2a t} \right) 
\theta\left(\frac{e^x}{a t},t\right) \frac{da dx}{a}\,.
\end{equation}
Changing the $x$ integration variable to $x= \log(a\rho)$,
the range of integration for $x:(-\infty,\infty)$
is mapped to $\rho:(0,\infty)$. 

\begin{eqnarray}\label{Agen}
\mathbb{P}\left(\frac{1}{t} A_t^{(\mu)} \in da\right) = 
e^{-\frac12\mu^2 t} \frac{da}{a} \int_0^\infty (a \rho)^\mu 
e^{-\frac{1+a^2 \rho^2}{2a t}}
\theta\left(\frac{\rho}{t},t\right)
\frac{d\rho}{\rho} \,.
\end{eqnarray}

We would like to use here the asymptotic expansion for $\theta(\rho/t,t)$ 
from Proposition~\ref{prop:1}. 
Integrating term by term the asymptotic expansion requires a uniform bound on the approximation error. 
Such a bound was given in (\ref{errbound}) based on numerical evidence,
which is furthermore uniform in $\rho$.
Using this result one can transfer the asymptotic expansion
of Proposition~\ref{prop:1} to an asymptotic result for the density of the time-average
of the gBM. Define this density as
\begin{equation}
\mathbb{P}\left(\frac{1}{t} A_t^{(\mu)} \in da\right) = f(a,t) \frac{da}{a} \,.\nonumber
\end{equation}


\begin{proposition}\label{prop:smallA}
Assume the error bound (\ref{errbound}). 
Then we have 
\begin{equation}\label{Apdf}
f(a,t) = \frac{1}{2\pi t} e^{-\frac12 \mu^2 t} \int_0^\infty (a\rho)^\mu e^{-\frac{1}{t} H(\rho)} G(\rho) \frac{d\rho}{\rho} \cdot (1 + \varepsilon(a,t) )
\end{equation}
with 
\begin{equation}
H(\rho) := \frac{1+a^2 \rho^2}{2a} -\frac{\pi^2}{2} + F(\rho) \,.
\end{equation}
The error term is bounded as $|\varepsilon(a,t) |\leq \frac{1}{70} t$.

Furthermore, the density $f(a,t)$ has the asymptotic form in the small-maturity limit
\begin{equation}\label{fLaplace}
f(a,t) \sim \frac{1}{\sqrt{2\pi t}}
g(a,\mu)  e^{- \frac{1}{t} J(a)} \,,\quad t\to 0\,,
\end{equation}
with $J(a) \equiv \inf_{\rho\geq 0} H(\rho) = H(\rho_*)$ and
\begin{equation}\label{gadef}
g(a,\mu) := 
(a\rho_*)^\mu G(\rho_*) \frac{1}{\rho_* \sqrt{H''(\rho_*)}} \,,\quad
\rho_* = \arg\inf_\rho H(\rho)\,.
\end{equation}

\end{proposition}

\begin{proof}
The function $\theta(\rho,t)$ can be expressed as the 
leading order asymptotic term in Proposition~\ref{prop:1} 
\begin{equation}
\theta(\rho/t,t) = \frac{1}{2\pi t} e^{-\frac{1}{t}(F(\rho) - \frac{\pi^2}{2})} G(\rho) 
\Big(1 + \vartheta(\rho,t)\Big) \,,
\end{equation}
where the error term
$\vartheta(\rho,t)$ is bounded as in (\ref{errbound}).

Substituting into the integral in (\ref{Agen}) gives
\begin{eqnarray}\label{Agen2}
&& \int_0^\infty (a \rho)^\mu 
e^{-\frac{1+a^2 \rho^2}{2a t}}
\theta\left(\frac{\rho}{t},t\right)
\frac{d\rho}{\rho} =
\frac{1}{2\pi t} \int_0^\infty (a \rho)^\mu 
e^{-\frac{1}{t} H(\rho)} G(\rho)
\frac{d\rho}{\rho} + I_\varepsilon(a,t) 
\end{eqnarray}
with $\rho_* =\arg\inf_\rho H(\rho)$.

The term $I_\varepsilon(a,t)$ is bounded using the error bound (\ref{errbound})
\begin{eqnarray}\label{Ainterr2}
&& |I_\varepsilon(a,t) | \leq \int_0^\infty 
(a \rho)^\mu 
e^{-\frac{1+a^2\rho^2}{2 a t}} \hat \theta(\rho,t) |\vartheta(\rho,t) |
\frac{d\rho}{\rho}  \leq \frac{1}{70} t \int_0^\infty 
(a \rho)^\mu 
e^{-\frac{1+a^2\rho^2}{2 a t}} \hat \theta(\rho,t) 
\frac{d\rho}{\rho} 
\end{eqnarray}
which gives $|\varepsilon(t) | \leq \frac{1}{70} t$, as stated.
 
The asymptotic result (\ref{fLaplace}) follows by application of the Laplace asymptotic expansion \cite{Erdelyi} to the integral (\ref{Apdf}).
\end{proof}


\subsection{Properties of $J(a)$}
\label{sec:4J}

In this section we give a more explicit result for the exponential factor
$J(a)$ in the leading asymptotic density (\ref{Apdf}), and study its properties.

\begin{proposition}\label{prop:J}
The function $J(a)$ is given by:

i) for $a\geq 1$ we have
\begin{eqnarray}
J(a) = \frac{1}{2a} - \frac{1}{2a} \cosh^2 x_1 + \frac12 x_1^2
=\frac12 x_1^2 - \frac12 x_1 \tanh x_1
\end{eqnarray}
where $x_1$ is the solution of the equation
\begin{equation}
\frac{\sinh 2x_1}{2x_1} =a \,.
\end{equation}
This case corresponds to $0< \rho_* \leq 1$.

ii) for $0 < a\leq 1$ we have
\begin{equation}
J(a) = \frac12 (y_1-\pi) \tan(y_1-\pi) - \frac12 (y_1-\pi)^2
\end{equation}
where $y_1$ is the solution of the equation
\begin{equation}
y_1 - \frac{1}{a} \sin y_1 \cos y_1=\pi \,.
\end{equation}
This case corresponds to $1 \leq \rho_* < \frac{\pi}{2}$.

\end{proposition}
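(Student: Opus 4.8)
The plan is to evaluate $J(a) = H(\rho_*) = \inf_{\rho>0} H(\rho)$ explicitly by exploiting the piecewise structure of $F(\rho)$ from Proposition~\ref{prop:1}. First I would locate the minimizer $\rho_*$ by setting $H'(\rho_*) = 0$. Since $H(\rho) = \frac{1+a^2\rho^2}{2a} - \frac{\pi^2}{2} + F(\rho)$, we have $H'(\rho) = a\rho + F'(\rho)$, and using the closed form for $F'(\rho)$ from (\ref{Fpsol}) the stationarity condition becomes $a\rho_* = \cosh x_1$ in the regime $0<\rho_*<1$ (where $x_1$ solves $\rho_*\sinh x_1 = x_1$), and $a\rho_* = -\cos y_1$ in the regime $\rho_*>1$ (where $y_1$ solves $y_1 + \rho_*\sin y_1 = \pi$). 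The threshold $\rho_*=1$ corresponds to $x_1 = 0$, i.e. $a\rho_* = 1$, hence $a=1$; this is what separates cases (i) and (ii).

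For case (i), $a \geq 1$: here $0 < \rho_* \leq 1$, so I use the first branch of (\ref{Fsol}), $F(\rho_*) = \frac12 x_1^2 - \rho_*\cosh x_1 + \frac{\pi^2}{2}$. Combining with the stationarity relation $a\rho_* = \cosh x_1$ and the defining relation $\rho_* \sinh x_1 = x_1$, I eliminate $\rho_*$: from $\rho_* = \cosh x_1/a$ and $\rho_*\sinh x_1 = x_1$ we get $\sinh x_1 \cosh x_1 = a x_1$, i.e. $\frac{\sinh 2x_1}{2x_1} = a$, which is the stated equation for $x_1$. Then $J(a) = \frac{1+a^2\rho_*^2}{2a} - \frac{\pi^2}{2} + F(\rho_*) = \frac{1}{2a} + \frac{a\rho_*^2}{2} - \rho_*\cosh x_1 + \frac12 x_1^2 = \frac{1}{2a} + \frac{\cosh^2 x_1}{2a} - \frac{\cosh^2 x_1}{a} + \frac12 x_1^2 = \frac{1}{2a} - \frac{\cosh^2 x_1}{2a} + \frac12 x_1^2$, matching the first expression. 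The simplified form $\frac12 x_1^2 - \frac12 x_1 \tanh x_1$ then follows by writing $\frac{1 - \cosh^2 x_1}{2a} = -\frac{\sinh^2 x_1}{2a}$ and using $a = \frac{\sinh x_1 \cosh x_1}{x_1}$, so $-\frac{\sinh^2 x_1}{2a} = -\frac{x_1 \sinh x_1}{2\cosh x_1} = -\frac12 x_1\tanh x_1$.

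For case (ii), $0 < a \leq 1$: here $\rho_* > 1$, so I use the second branch $F(\rho_*) = -\frac12 y_1^2 + \rho_*\cos y_1 + \pi y_1$ with stationarity $a\rho_* = -\cos y_1$ and the defining relation $y_1 + \rho_*\sin y_1 = \pi$. Substituting $\rho_* = -\cos y_1/a$ into the defining relation gives $y_1 - \frac{1}{a}\sin y_1\cos y_1 = \pi$, the stated equation. I would then carry out the same elimination: $J(a) = \frac{1}{2a} + \frac{a\rho_*^2}{2} - \frac{\pi^2}{2} - \frac12 y_1^2 + \rho_*\cos y_1 + \pi y_1$, substitute $\rho_* = -\cos y_1/a$ and simplify, using the substitution $\psi := y_1 - \pi$ (so $\cos y_1 = -\cos\psi$, $\sin y_1 = -\sin\psi$, and the $y_1$-equation becomes $\psi = \frac{1}{a}\sin\psi\cos\psi$, i.e. $a = \frac{\sin\psi\cos\psi}{\psi}$) to land on $J(a) = \frac12\psi\tan\psi - \frac12\psi^2$, which is the claimed $\frac12(y_1-\pi)\tan(y_1-\pi) - \frac12(y_1-\pi)^2$. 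Finally I would note the range claims: in (i), $a$ ranges over $[1,\infty)$ as $x_1$ ranges over $[0,\infty)$, with $\rho_* \in (0,1]$; in (ii), solving $\psi = \frac{1}{a}\sin\psi\cos\psi$ with $a\le 1$ forces $\psi \in (-\pi/2, 0]$, hence $y_1 \in (\pi/2,\pi]$ and $\rho_* = -\cos y_1/a = \cos\psi/a \in [1,\pi/2)$, with the endpoint $\rho_*=\pi/2$ attained as $a\to 0$ (consistent with the global minimum of $F$ at $\rho=\pi/2$). The main obstacle is purely bookkeeping: one must consistently track which branch of $F$ is active, verify that the critical point found is indeed the global minimum of $H$ (using convexity of the quadratic part together with the monotonicity of $F'$ established via (\ref{Fpsol}) — note $F'$ is increasing since $\cosh x_1$ decreases in $\rho$ on $(0,1)$ while $-\cos y_1$... one should check $H'' = a + F'' > 0$ throughout), and confirm the matching of the two cases at $a=1$ where $x_1 = 0$, $y_1 = \pi$, giving $J(1) = 0$ from both formulas.
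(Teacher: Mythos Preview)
Your proposal is correct and follows essentially the same approach as the paper's proof: set $H'(\rho_*)=a\rho_*+F'(\rho_*)=0$, use the explicit form of $F'$ from (\ref{Fpsol}) to express $\rho_*$ in terms of $x_1$ (resp.\ $y_1$), derive the equations $\frac{\sinh 2x_1}{2x_1}=a$ and $y_1-\frac{1}{a}\sin y_1\cos y_1=\pi$ by eliminating $\rho_*$, and substitute back into $H(\rho_*)$. You actually spell out more of the algebra than the paper does (in particular the $\psi=y_1-\pi$ substitution and the check that the critical point is a minimum), but the strategy is identical.
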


\begin{proof}

We start with the expression
\begin{eqnarray}\label{Jinf}
J(a) &=& \inf_{\rho \geq 0} \left( \frac{1+a^2 \rho^2}{2a} -\frac{\pi^2}{2}
+ F(\rho) \right) \\
&=& \frac{1}{2a} + \frac{a}{2} \rho_*^2 - \frac{\pi^2}{2}
+ F(\rho_*) \nonumber
\end{eqnarray}
where $\rho_*$ is the solution of the equation
\begin{equation}\label{eqrho}
F'(\rho) + a \rho  = 0\,.
\end{equation}

Using the explicit result for $F'(\rho)$ in Eq.~(\ref{Fpsol}) we get
$F'(\rho)< 0$ for $0 < \rho < \frac{\pi}{2}$ and
$F'(\rho) > 0$ for $\rho > \frac{\pi}{2}$.
This implies that the
solution of the equation (\ref{eqrho}) must satisfy $\rho_*\leq \frac{\pi}{2}$.
As $a\to 0$ the minimizer approaches the upper boundary 
$\lim_{a\to 0} \rho_*=\frac{\pi}{2}$ as $F'(\frac{\pi}{2})=0$.

The solution of (\ref{eqrho}) for $a=1$ is $\rho_*=1$, since $F'(1)=-1$,
from Prop.~\ref{prop:F} point (iii).

The strategy of the proof is to eliminate $\rho_*$ using (\ref{Fpsol}) in terms
of $x_1$ and $y_1$, respectively.
Substituting (\ref{Fpsol}) into (\ref{eqrho}) gives 
\begin{eqnarray}\label{31}
0 < \rho < 1 &:& \cosh x_1 = a \rho \\
\rho > 1 &:& \cos y_1 = - a \rho \,.
\end{eqnarray}

These equations can be used to eliminate $\rho_*$ from the expression of 
$F(\rho_*)$. Substituting into (\ref{Jinf}) reproduces the results shown.

The function $\rho_*(a)$ has the expansion around $a=1$
\begin{equation}\label{rhostar}
\rho_*(a) = 1 - \frac14(a-1) + \frac{19}{160} (a-1)^2 - \frac{1511}{22400}
(a-1)^3 + O((a-1)^4) \,.
\end{equation}
To prove this expansion, assume first $a \to 1$ from above, 
and thus $\rho_* <1$. From Eq.~(\ref{31}) it follows that
\begin{equation}
\rho_* = \frac{1}{a} \cosh x_1(a)
\end{equation}
where $x_1(a)$ is the solution of $\frac{\sinh 2x_1}{2x_1}=a$. 
Expanding around $a=1$ gives $x_1(a) = \sqrt{\frac32(a-1)}  + \cdots$, 
and substituting
above gives the expansion of $\rho_*(a)$. A similar result is obtained by
assuming $a\to 1$ from below.

\end{proof}

The function $J(a)$ has the following properties:

i) The function $J(a)$ vanishes at $a=1$. The infimum in (\ref{Jinf})
is reached at $\rho_*=1$. 

ii) For $a> 1$ the infimum is reached at $0 < \rho_* < 1$. 
As $a \to \infty$ we have $\lim_{a\to \infty} \rho_* = 0$.

iii) For $0 < a < 1$ the infimum is reached at $1 < \rho_* < \rho_0=\frac{\pi}{2}$. 
As $a \to 0$ we have $\lim_{a\to 0} \rho_* = \pi/2$. 

\subsection{Comparison with the Large Deviations result}

The distributional properties of $A_t^{(\mu)}$ as $t\to 0$ were studied using 
probabilistic methods from Large Deviations theory in \cite{MLP,SIFIN,PZAsianCEV}. 
Assuming that $S_t$
follows a one-dimensional diffusion $dS_t= \sigma(S_t) S_t dW_t + 
(r-q) S_t dt$, it has been shown
in \cite{SIFIN} that, under weak assumptions on $\sigma(\cdot)$,
$\mathbb{P}\left(\frac{A_t}{t}\in \cdot\right)$ 
satisfies a Large Deviations property as $t\to 0$.
This result includes as a limiting case the geometric Brownian motion 
corresponding to $\sigma(x)=\sigma$. 

The main result can be extracted from the proof of Theorem 2 in \cite{SIFIN},
and can be formulated as follows. 
\begin{proposition}\label{prop:SIFIN}
Assume $S_t = e^{\sigma W_t + (r-\frac12\sigma^2)t}$ and 
$A_t = \int_0^t S_t dt$. The time average of $S_t$ satisfies a Large
Deviations property in the $t\to 0$ limit
\begin{equation}\label{LDP}
\lim_{t\to 0}t \log \mathbb{P}\left(\frac{A_t}{t} \in \cdot \right)=
- \frac{1}{\sigma^2} \mathcal{J}_{\rm BS}(\cdot ) 
\end{equation}
with rate function $\mathcal{J}_{\rm BS}(\cdot ) $ expressed
as the solution of a variational problem, which is furthermore 
solved in closed form in Proposition 12 of \cite{SIFIN}.
\end{proposition}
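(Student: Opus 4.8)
The plan is to obtain Proposition~\ref{prop:SIFIN} as a corollary of the sharp density asymptotics of Proposition~\ref{prop:smallA}, and then to identify the resulting rate function with the variational one of \cite{SIFIN}. First I would reduce the statement to the parametrization of the excerpt: by Brownian scaling $(\sigma W_s)_{s\ge0}\stackrel{d}{=}(2B_{\sigma^2 s/4})_{s\ge0}$ for a standard Brownian motion $B$, so that setting $\mu:=\frac{2}{\sigma^2}(r-\frac12\sigma^2)$ and $\tau:=\frac{\sigma^2}{4}t$ and changing variables in the time integral,
\begin{equation}
\frac{A_t}{t}=\frac1t\int_0^t e^{\sigma W_s+(r-\frac12\sigma^2)s}\,ds \stackrel{d}{=}\frac1\tau\int_0^\tau e^{2(B_u+\mu u)}\,du=\frac1\tau A_\tau^{(\mu)}\,,
\end{equation}
and $t\to0$ is equivalent to $\tau\to0$.

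Next, from Proposition~\ref{prop:smallA} the law of the right-hand side has a density of the form $(2\pi\tau)^{-1/2}g(a,\mu)e^{-J(a)/\tau}\frac1a$ with relative error at most $\tau/70$, which in the $t$-variable reads $e^{-\frac{4}{\sigma^2 t}(J(a)+o(1))}$ up to a subexponential prefactor. To promote this pointwise estimate to the large deviations upper and lower bounds over closed, resp.\ open, sets I would integrate the density: over a closed $C$ one bounds $\mathbb{P}(\frac{A_t}{t}\in C)$ by $\int_C(\cdots)\,da$ and uses that $J$ is continuous, that the prefactor contributes only $o(1/t)$ to the exponent, and that $J(a)\to+\infty$ both as $a\to0^+$ and as $a\to\infty$ (this coercivity, needed also for exponential tightness, is read off from Proposition~\ref{prop:J}: $J(a)\sim\frac12 x_1^2\to\infty$ with $x_1\to\infty$ as $a\to\infty$, while for $a\to0^+$ one has $y_1-\pi\to(-\pi/2)^+$ so that the term $\frac12(y_1-\pi)\tan(y_1-\pi)$ blows up); over an open $O$ one restricts the integral to a small ball around $\mathrm{argmin}_{a\in O} J(a)$. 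This yields $\lim_{t\to0}t\log\mathbb{P}(\frac{A_t}{t}\in\cdot)=-\frac{4}{\sigma^2}J(\cdot)$, which is exactly (\ref{LDP}) provided $\mathcal{J}_{\rm BS}=4J$.

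It then remains to check that $4J(a)$, with $J$ given in closed form by Proposition~\ref{prop:J}, coincides (up to normalization) with $\mathcal{J}_{\rm BS}(a)$, the solution of the variational problem of Proposition~12 of \cite{SIFIN}, namely $\inf\{\tfrac12\int_0^1\dot\psi^2\,du:\psi(0)=0,\ \int_0^1 e^{\psi}\,du=a\}$. The Euler--Lagrange equation is $\ddot\psi=-\lambda e^{\psi}$, whose first integral gives extremals that are branches of the logarithm of a $\cosh$ profile (for $a\ge1$) or a $\cos$ profile (for $a\le1$); imposing $\psi(0)=0$ and $\int_0^1 e^{\psi}\,du=a$ reduces to precisely the transcendental equations $\frac{\sinh 2x_1}{2x_1}=a$ and $y_1-\frac1a\sin y_1\cos y_1=\pi$ of Proposition~\ref{prop:J}, and substituting the extremal back into the action reproduces $4J(a)$. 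This matching of two independently derived closed forms --- one from the saddle-point analysis of $\theta$, one from the calculus of variations --- is the step I expect to be the most delicate, together with the bookkeeping of the subexponential prefactor in the second step. An independent and more robust route, which bypasses Proposition~\ref{prop:smallA} altogether, is to write $\frac{A_t}{t}=\int_0^1 e^{\sigma W_{tu}+(r-\frac12\sigma^2)tu}\,du$, apply Schilder's theorem to $\sigma W_{t\cdot}$ at speed $t$, discard the drift $(r-\frac12\sigma^2)tu\to0$ as a negligible perturbation, and invoke the contraction principle along the continuous map $\psi\mapsto\int_0^1 e^{\psi}\,du$; the only genuine difficulty there is the exponential-moment control required because this map is unbounded, which one handles by truncation of the exponential.
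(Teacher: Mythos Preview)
The paper does not prove Proposition~\ref{prop:SIFIN}: it is quoted as a result ``extracted from the proof of Theorem~2 in \cite{SIFIN}'', with the closed form of $\mathcal{J}_{\rm BS}$ taken from Proposition~12 of that reference. Its role in the present paper is only that of a consistency check: having obtained the exponent $J(a)$ in Proposition~\ref{prop:smallA} from the saddle-point analysis of $\theta$, the author verifies in (\ref{Jrel}) that $4J(a)=\mathcal{J}_{\rm BS}(a)$, thereby recovering the known large-deviations rate function by an independent method.

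Your primary route reverses this logic, deriving the LDP from the density asymptotics of Proposition~\ref{prop:smallA}. The Brownian scaling, the identification $\mathcal{J}_{\rm BS}=4J$, and the coercivity argument for $J$ are all correct, and the scheme is sound in outline. Two caveats are worth flagging. First, Proposition~\ref{prop:smallA} is itself stated conditionally on the numerically-supported bound (\ref{errbound}), so an LDP obtained this way inherits that caveat, whereas the result in \cite{SIFIN} does not. Second, to turn the pointwise density estimate into set-level LDP bounds you need global control of the prefactor $g(a,\mu)/a$ on $(0,\infty)$; the paper supplies only the local expansion near $a=1$ (Proposition~\ref{prop:8}), so you would have to extract the boundary behavior directly from the explicit formulas for $G(\rho_*)$ and $H''(\rho_*)$. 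Your alternative route via Schilder's theorem and the contraction principle for $\psi\mapsto\int_0^1 e^{\psi}\,du$ is precisely the approach of \cite{SIFIN} that the paper is citing, so that part of your proposal coincides with the paper's (implicit) proof by reference.
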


The asymptotic density (\ref{Apdf}) has the same form as the exponential decay 
predicted by Large Deviations theory (\ref{LDP}). 
We show here that the result for $J(a)$ in Proposition \ref{prop:J} is related 
to the rate function $\mathcal{J}_{\rm BS}(a)$ derived in \cite{SIFIN} as 
\begin{equation}\label{Jrel}
J(a) = \frac14 \mathcal{J}_{\rm BS}(a) \,.
\end{equation}
The factor of 1/4 is due to the factor of 2 in the exponent of the
definition of $A_t$ in the Yor formula (\ref{AB}),
which acts as a volatility $\sigma=2$.

The function $\mathcal{J}_{\rm BS}(x)$ is given by Proposition 12 in \cite{SIFIN}
which we reproduce here for convenience.

\begin{proposition}[Prop.~12 in \cite{SIFIN}]
The rate function $\mathcal{J}_{\rm BS}(x)$ is given by
\begin{eqnarray}
\mathcal{J}_{\rm BS}(x)= \left\{
\begin{array}{cc}
\frac12\beta^2 - \beta \tanh \left( \frac{\beta}{2}\right) \,, & x\geq 1 \\
2\xi( \tan \xi - \xi) \,, & x \leq 1 \\
\end{array}
\right.
\end{eqnarray}
where $\beta$ is the solution of the equation
\begin{equation}
\frac{\sinh \beta}{\beta} = x 
\end{equation}
and $\xi$ is the solution in $[0, \pi/2]$ of the equation
\begin{equation}
\frac{\sin 2\xi}{2\xi} = x \,.
\end{equation} 
\end{proposition}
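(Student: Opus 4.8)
The plan is to obtain this statement as a by-product of the analysis of Section~\ref{sec:4J} rather than reproving the Large Deviations result of \cite{SIFIN} from scratch: one option is simply to cite \cite{SIFIN}, but the more informative route is to show that the closed form for $J(a)$ in Proposition~\ref{prop:J} coincides, up to the factor $\frac14$ recorded in (\ref{Jrel}), with the expression claimed here. Combined with the Large Deviations property of Proposition~\ref{prop:SIFIN} and the precise asymptotic density of Proposition~\ref{prop:smallA}, which together force $\frac{1}{\sigma^2}\mathcal{J}_{\rm BS}(a) = J(a)$ with effective volatility $\sigma = 2$ (the $2$ in the exponent of $A_t^{(\mu)}$), this identifies $\mathcal{J}_{\rm BS}$ and reproduces the stated formula. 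So the concrete task reduces to exhibiting an explicit change of variables between the two parametrisations, one for each branch.

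For the branch $x \geq 1$ I would set $\beta := 2x_1$, where $x_1$ is the root of $\frac{\sinh 2x_1}{2x_1} = x$ supplied by Proposition~\ref{prop:J}(i). Then $\frac{\sinh\beta}{\beta} = \frac{\sinh 2x_1}{2x_1} = x$, so $\beta$ is exactly the quantity appearing in the claim, and using the second expression $J(x) = \frac12 x_1^2 - \frac12 x_1 \tanh x_1$ one gets
\begin{equation}
4 J(x) = 2 x_1^2 - 2 x_1 \tanh x_1 = \frac12\beta^2 - \beta\tanh\frac{\beta}{2}\,,
\end{equation}
which is the first line of the proposition.

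For the branch $0 < x \leq 1$ I would set $\xi := \pi - y_1$, where $y_1$ solves $y_1 - \frac{1}{x} \sin y_1 \cos y_1 = \pi$ as in Proposition~\ref{prop:J}(ii). In this regime $\cos y_1 = -x\rho_* < 0$ forces $y_1 \in (\frac{\pi}{2},\pi]$, hence $\xi \in [0,\frac{\pi}{2})$, the range demanded in the statement. Substituting $\sin y_1 = \sin\xi$, $\cos y_1 = -\cos\xi$ and $2\sin\xi\cos\xi = \sin 2\xi$ turns the defining equation into $\frac{\sin 2\xi}{2\xi} = x$, while $y_1 - \pi = -\xi$ and $\tan(y_1-\pi) = -\tan\xi$ give
\begin{equation}
4 J(x) = 2(y_1-\pi)\tan(y_1-\pi) - 2(y_1-\pi)^2 = 2\xi\tan\xi - 2\xi^2 = 2\xi(\tan\xi - \xi)\,,
\end{equation}
which is the second line. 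Continuity at the junction $x=1$, where $x_1,\xi\to 0$ and both expressions vanish, is consistent with the fourth-order saddle point case $\rho=1$ of Proposition~\ref{prop:1}.

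The routine part of this argument is the pair of substitutions above; the step that requires genuine care — and which I expect to be the main obstacle — is the justification of the normalisation relation (\ref{Jrel}). One must align the exponential rate $e^{-J(a)/t}$ of Proposition~\ref{prop:smallA} with the Large Deviations rate $\frac{1}{\sigma^2}\mathcal{J}_{\rm BS}$, $\sigma = 2$, of Proposition~\ref{prop:SIFIN}, which in turn rests on the uniform-in-$\rho$ error bound (\ref{errbound}) that legitimises the term-by-term Laplace integration behind Proposition~\ref{prop:smallA}. One should also check, using the sign of $F'(\rho)$ from (\ref{Fpsol}), that the minimiser $\rho_*$ in (\ref{Jinf}) is unique and that the two cases $\rho_* \leq 1$ and $\rho_* \geq 1$ exhaust all $x > 0$, so that the two branches of the formula together cover the whole range $x \in (0,\infty)$.
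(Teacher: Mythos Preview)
The paper does not actually prove this proposition: it is quoted verbatim from \cite{SIFIN} ``for convenience,'' and the only argument the paper gives is the one-line verification immediately following it, namely that (\ref{Jrel}) holds once one identifies $x_1 = \frac12\beta$ in case~(i) and $y_1 - \pi = \xi$ in case~(ii) of Proposition~\ref{prop:J}. Your proposal uses precisely the same two substitutions (with the cosmetic difference $\xi = \pi - y_1$ rather than $y_1 - \pi$, which is harmless since every expression involved is even in $\xi$), so the algebraic core is identical.

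The only real difference is the direction of the logic. The paper treats the formula as an external input from \cite{SIFIN} and checks it against $4J(a)$; you instead try to \emph{derive} the formula by first establishing $\mathcal{J}_{\rm BS} = 4J$ from Propositions~\ref{prop:SIFIN} and~\ref{prop:smallA}, and then reading off the closed form from Proposition~\ref{prop:J}. That route is sound but, as you yourself flag, it leans on Proposition~\ref{prop:SIFIN}, which is again a citation of \cite{SIFIN} — so you have not avoided the dependence on the external reference, only moved it. In either framing the substance is the same pair of changes of variable you wrote down.
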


The relation (\ref{Jrel}) follows by identifying $x_1 = \frac12 \beta$
in the first case of Proposition \ref{prop:J}, and $y_1-\pi = \xi$ in
the second case. 

The rate function $\mathcal{J}_{\rm BS}(a)$ for $a \simeq 1$ can be expanded in
a Taylor series in $\log a$
(see Eq.~(36) of \cite{SIFIN})
\begin{equation}\label{JTaylor}
\mathcal{J}_{\rm BS}(a) = \frac32 \log^2 a - \frac{3}{10} \log^3 a 
+ \frac{109}{1400}
\log^4 a + O(\log^5 a) \,.
\end{equation}
This expansion is convenient for numerical evaluation of the rate function.
Proposition 13 in \cite{SIFIN} gives also asymptotic expansions of
$\mathcal{J}_{\rm BS}(a)$ for $a\to 0$ and $a\to \infty$, which can be translated
directly into corresponding asymptotic expansions for $J(a)$.


\subsection{Properties of $g(a,\mu)$}
\label{sec:4g}

We study here in more detail the properties of the function $g(a,\mu)$ 
defined in (\ref{gadef}).
This can be put into a more explicit form as follows.

\begin{proposition}\label{prop:8}
The function $g(a,\mu)$ has the explicit form
\begin{eqnarray}\label{gamuexp}
g(a,\mu) &=& 
e^{\mu \log a + (\mu-1)\log\rho_*(a)} G(\rho_*) \frac{1}{\sqrt{H''(\rho_*)}}
= \frac{\sqrt3}{2} e^{c_1 \log a + 
c_2 \log^2 a + O(\log^3 a)} \,.
\end{eqnarray}
with
\begin{eqnarray}\label{c1}
&& c_1 = \frac34 (\mu+1) - \frac45 \\
\label{c2}
&& c_2 = -\frac{3}{80}(\mu+1) + \frac{57}{1400}\,.
\end{eqnarray}

\end{proposition}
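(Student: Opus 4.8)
The plan is to evaluate the two factors in the definition (\ref{gadef}) of $g(a,\mu)$, namely $(a\rho_*)^\mu G(\rho_*)\rho_*^{-1}$ and $1/\sqrt{H''(\rho_*)}$, and then assemble their small-$\log a$ expansions. First I would rewrite $(a\rho_*)^\mu \rho_*^{-1} = e^{\mu\log a + (\mu-1)\log\rho_*}$, which gives the first equality in (\ref{gamuexp}) directly from (\ref{gadef}); no work is needed there. The real content is the Taylor expansion in $\log a$, so I would first set up $\rho_*(a)$ as a function of $a$ near $a=1$. By Proposition~\ref{prop:J} (specifically (\ref{rhostar})) we have $\rho_*(a) = 1 - \frac14(a-1) + \frac{19}{160}(a-1)^2 + O((a-1)^3)$, and since I want an expansion in $\log a$ rather than $a-1$ I would re-expand using $a - 1 = \log a + \frac12\log^2 a + O(\log^3 a)$, obtaining $\log\rho_*(a) = -\frac14\log a + \left(-\frac18 + \frac{19}{160}\right)\log^2 a + O(\log^3 a) = -\frac14\log a + \frac{1}{160}\log^2 a\cdot(\ldots)$ — the precise quadratic coefficient to be pinned down by a short computation.

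Next I would compute $H''(\rho_*)$. From $H(\rho) = \frac{1+a^2\rho^2}{2a} - \frac{\pi^2}{2} + F(\rho)$ we get $H''(\rho) = a + F''(\rho)$, and $F''$ is obtained by differentiating (\ref{Fpsol}): for $0<\rho<1$, $F''(\rho) = -\sinh x_1 \cdot \frac{dx_1}{d\rho}$ where $x_1$ solves $\rho\sinh x_1 = x_1$ (equivalently (\ref{eqx1})), and implicit differentiation gives $\frac{dx_1}{d\rho}$ in closed form; similarly for $\rho>1$ using (\ref{eqy1}). At $a=1$ we have $\rho_*=1$, $x_1=0$, and one should recover $H''(1)$ as a finite positive number — I would use the expansions of $x_1^2$ in powers of $\rho-1$ already recorded in the proof of Proposition~\ref{prop:F}(iii) to get $F''$ near $\rho=1$, hence $H''(\rho_*(a))$ as a series in $a-1$, then convert to $\log a$. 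Combining, $\log g(a,\mu) = \log\frac{\sqrt3}{2} + \mu\log a + (\mu-1)\log\rho_*(a) + \log G(\rho_*(a)) - \frac12\log H''(\rho_*(a))$, where $\log G(\rho_*)$ is expanded via Proposition~\ref{prop:G}(iii) composed with $\rho_*(a)$. Collecting the coefficient of $\log a$ gives $c_1 = \mu - \frac14(\mu-1) + (\text{contribution from }\log G) - \frac12(\text{contribution from }\log H'')$; matching this against (\ref{c1}) amounts to checking $\mu - \frac14(\mu-1) + \frac15 \cdot(-\frac14)\cdot\sqrt3/\sqrt3\cdot(\ldots)$ — i.e. a bookkeeping exercise that should land on $\frac34(\mu+1) - \frac45$, and similarly for the $\log^2 a$ coefficient $c_2$.

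The main obstacle I anticipate is purely computational: carrying enough terms consistently through the chain of composed expansions ($x_1$ in $\rho-1$, then $\rho_*$ in $a-1$, then $a-1$ in $\log a$, feeding $F$, $F''$, $G$) so that the quadratic coefficients in $\log a$ come out exactly right, including the value $H''(1)$ and the first two Taylor coefficients of $H''(\rho_*(a))$. There is also a mild subtlety that two regimes ($a\geq 1$ giving $0<\rho_*\leq 1$, and $a\leq 1$ giving $1\leq\rho_*<\pi/2$) must be treated and shown to match smoothly at $a=1$; by Proposition~\ref{prop:J} and the $C^\infty$ gluing of $F$ at $\rho=1$ (evident from (\ref{F1exp})), the one-sided expansions agree, so a single power series in $\log a$ is valid on both sides. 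Once all pieces are in hand, substituting into $\log g$ and reading off the coefficients yields (\ref{c1}) and (\ref{c2}); I would present the derivation of $\log\rho_*$, $\log G(\rho_*)$, and $\log H''(\rho_*)$ as three short lemmas (or displayed computations) and then conclude.
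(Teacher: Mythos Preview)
Your proposal is correct and follows essentially the same route as the paper: expand $\log\rho_*(a)$, $\log G(\rho_*(a))$, and $\log H''(\rho_*(a))$ separately in powers of $\log a$ (using (\ref{rhostar}), Proposition~\ref{prop:G}(iii), and $H''=a+F''$ with (\ref{F1exp})), then sum to read off $c_1,c_2$. One small caution on the calculation you flagged as tentative: the quadratic coefficient $-\tfrac18+\tfrac{19}{160}$ you wrote for $\log\rho_*$ omits the $-\tfrac12(\rho_*-1)^2$ term from $\log(1+x)=x-\tfrac12 x^2+\cdots$; including it gives $-\tfrac{3}{80}$, which is what the paper obtains and what is needed for (\ref{c2}).
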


\begin{proof}
The proof follows by combining the expansions around $a=1$ of the factors in 
this expression:

i) $\rho_*(a)$ is given by (\ref{rhostar}) which is written in equivalent
form as
\begin{equation}
\log \rho_*(a) = - \frac14 \log a - \frac{3}{80} \log^2 a + \frac{1}{350}
\log^3 a + O(\log^4 a) \,.
\end{equation}

ii) $G(\rho_*)$ is obtained by substituting the expansion of $\rho_*(a)$
into the expansion of $G(\rho)$ in powers of $\rho-1$ from Proposition~\ref{prop:G}
\begin{eqnarray}
G(\rho_*(a)) &=& \sqrt3 \left(
1 + \frac{1}{20} \log a + \frac{37}{5600} \log^2 a -
\frac{41}{48000} \log^3 a + O(\log^4 a) \right) \\
&=& \sqrt3 \exp\left( \frac{1}{20} \log a + \frac{3}{560} \log^2 a
- \frac{1}{875} \log^3 a + O(\log^4 a) \right) \nonumber
\end{eqnarray}

iii) $H''(\rho_*(a)) = F''(\rho_*(a))+a$ can be evaluated using the
expansion of $F(\rho)$ from Prop.~\ref{prop:F} (iii) which gives
\begin{equation}\label{Hsol}
F''(\rho_*) = 
3 + \frac{9}{5} (a-1) - \frac{18}{175} (a-1)^2 + O((a-1)^3)\,.
\end{equation}
We get
\begin{equation}
H''(\rho_*(a)) = 4 + \frac{14}{5}(a-1) - \frac{18}{175}(a-1)^2 + O((a-1)^3)\,.
\end{equation}
It is convenient to express this in terms of $\log a$ as
\begin{equation}
H''(\rho_*) = 4 \exp\left( \frac{7}{10} \log a + \frac{111}{1400} \log^2 a
+ O(\log^3 a) \right)\,.
\end{equation}
\end{proof}

\begin{remark}\label{rmk:1}
We have $g(1,\mu) = \frac{\sqrt3}{2}$ for all $\mu \in \mathbb{R}$. 
This follows by noting that at $a=1$
we have $\rho_*(1)=1$, and thus the only factors different from 1 are
$G(1) = \sqrt3$ and $H''(1)=4$.
\end{remark}

Numerical evaluation of the function $g(a,\mu)$ shows that for
the driftless gBM case $\mu=-1$ this function is decreasing in $a$, 
and becomes increasing for sufficiently large and positive $\mu$.


\begin{figure}
    \centering
   \includegraphics[width=2.4in]{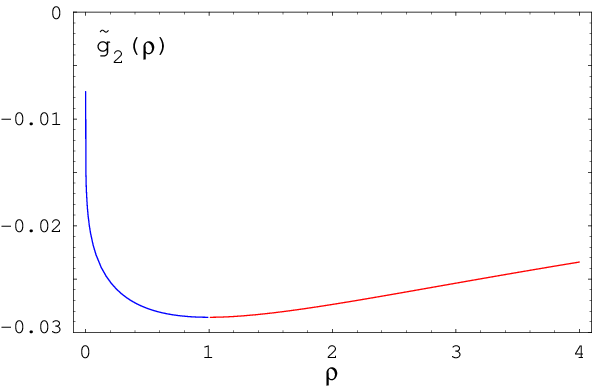}
    \caption{
Plot of the function $\tilde g_2(\rho)$ appearing in the subleading correction to the
asymptotic expansion of the Hartman-Watson integral (\ref{thetasub}).
}
\label{Fig:errasympt0}
 \end{figure}

\vspace{0.2cm}
\textbf{Acknowledgements}\newline

I am grateful to Peter N\'andori and Lingjiong Zhu for very useful discussions and comments.

\appendix

\section{Subleading correction}

We give here the subleading correction to the asymptotic expansion of
the Hartman-Watson integral $\theta(\rho/t,t)$ in Proposition \ref{prop:1}.

The first two terms of the asymptotic expansion of the Hartman-Watson
integral as $t\to 0$ are
\begin{equation}\label{thetasub}
\theta(\rho/t,t) = 
\frac{1}{2\pi t} G(\rho) e^{-\frac{1}{t}(F(\rho) - \frac{\pi^2}{2})}
\left(1 + \frac12 t \tilde g_2(\rho) + O(t^2) \right)
\end{equation}
where
\begin{eqnarray}\label{tildeg2}
\tilde g_2(\rho) = \left\{
\begin{array}{cc}
\frac{-12 + 9 \rho \cosh x_1 - 2 \rho^2 \cosh^2 x_1+5\rho^2}
{12(\rho \cosh x_1 - 1)^3} & \,, 0 < \rho \leq 1 \\
\frac{12 + 9 \rho \cos y_1 + 2 \rho^2 \cos^2 y_1-5\rho^2}
{12(1+\rho \cos y_1)^3} & \,, \rho > 1 \\
\end{array}
\right.
\end{eqnarray}
The result follows straighforwardly by keeping the next terms in the
expansions of the integrals appearing in the proof of Proposition~\ref{prop:1}.

The plot of $\tilde g_2(\rho)$ is given in the left panel of 
Figure~\ref{Fig:errasympt0}. It reaches a maximum (in absolute value) at $\rho=1$
where it is equal to $\tilde g_2(1)=-\frac{1}{35}$.

We list next a few properties of the function $\tilde g_2(\rho)$.
This function has the expansion around $\rho=1$ 
\begin{equation}\label{tildeg2rho1exp}
\tilde g_2(\rho) = 
- \frac{1}{35} + \frac{144}{67375} (1/\rho-1) + O((1/\rho-1)^2) \,.
\end{equation}
As $\rho\to \infty$ we get, using the asymptotics of $y_1\to 0$ from
the proof of Prop.~\ref{prop:F}(ii),
\begin{equation}
\tilde g_2(\rho) = - \frac{1}{4\rho} + \frac{3}{2\rho^2} + O(\rho^{-3})\,.
\end{equation}
For $\rho\to 0$, recall from the proof of Proposition \ref{prop:F} (i) 
that $\rho \cosh x_1 \sim \log(1/\rho)\to \infty$, which gives the asymptotics
\begin{equation}
\tilde g_2(\rho) = -\frac16 \frac{1}{\log(1/\rho)} + O(\log^{-2}(1/\rho))\,.
\end{equation}



\end{document}